\title{Continued fraction algorithms and Lagrange's theorem in ${\mathbb Q}_p$}
\author[1,\thanks{Author to whom correspondence should be addressed. Electronic mail: saito@fun.ac.jp}]{Asaki Saito}
\author[2]{Jun-ichi Tamura}
\author[3]{Shin-ichi Yasutomi}
\affil[1]{Future University Hakodate, Hakodate, Hokkaido 041-8655, Japan}
\affil[2]{Tsuda College, Kodaira, Tokyo 187-8577, Japan}
\affil[3]{Toho University, Funabashi, Chiba 274-8510, Japan}
\newtheorem{thm}{Theorem}[section]
\newtheorem{cor}[thm]{Corollary}
\newtheorem{lem}[thm]{Lemma}
\theoremstyle{definition}
\theoremstyle{remark}
\newtheorem{rem}{Remark}[section]
\newcommand{\thmref}[1]{Theorem~\ref{#1}}
\newcommand{\lemref}[1]{Lemma~\ref{#1}}
\newcommand{\corref}[1]{Corollary~\ref{#1}}
\begin{document}
\maketitle

\begin{abstract}
We present several continued fraction algorithms, each of which gives an
eventually periodic expansion for every quadratic element of ${\mathbb
Q}_p$ over ${\mathbb Q}$ and gives a finite expansion for every
rational number.
We also give, for each of our algorithms, the complete
characterization of elements having purely periodic expansions.
\end{abstract}

\section{Introduction}

In this paper, we intend to add a classical flavor to the $p$-adic
world related to the well-known theorem of Lagrange (resp., Galois)
on the complete characterization of eventually (resp., purely)
periodic continued fractions (cf. \cite{Lagrange}, \cite{Galois}; see also \cite{Perron}).

In what follows, $p$ denotes a prime, ${\mathbb Q}_p$ the field of
$p$-adic numbers, and ${\mathbb Z}_p$ the ring of $p$-adic integers.
Schneider \cite{S} gave an algorithm that generates continued
fractions of the form
\begin{equation*}%\label{eq:SchneiderContinuedFraction}
  \cfrac{p^{k_{1}}}{d_{1}+
    \cfrac{p^{k_{2}}}{d_{2}+
      \cfrac{p^{k_{3}}}{
        \ddots}}} \quad \left(k_1 \in {\mathbb Z}_{\ge 0}, k_{n+1} \in {\mathbb Z}_{> 0},
  d_{n} \in \left\{1, \dots, p-1 \right\} ~(n \ge 1)\right)
\end{equation*}
and found periodic continued fractions for some quadratic
elements of ${\mathbb Z}_p$ over ${\mathbb Q}$ (see also
\cite{B}).
Ruban \cite{R} gave an algorithm that generates continued fractions of
the shape
\begin{equation*}%\label{eq:RubanContinuedFraction}
g_{0}+  \cfrac{1}{g_{1}+
    \cfrac{1}{g_{2}+
      \cfrac{1}{
        \ddots}}},
\end{equation*}
where
\begin{equation*}
g_n \in \left\{ \sum_{i=-m}^{0} e_{i} p^{i} ~\Bigg\vert~ m \in {\mathbb Z}_{\ge 0},
e_{i} \in \left\{0, 1, \dots, p-1 \right\}\right\}~(n \ge 0).
\end{equation*}
On the other hand, Weger \cite{W} has found a class of infinitely many
quadratic elements $\alpha \in {\mathbb Q}_p$ over ${\mathbb Q}$ such
that the continued fraction expansion of $\alpha$ obtained by
Schneider's algorithm is not periodic.
Ooto \cite{O} has found a similar result related to the algorithm given by
Ruban.
Weger \cite{Weger2} has mentioned in his paper, ``it seems that a
simple and satisfactory $p$-adic continued fraction algorithm does not
exist'', and given a periodicity result of lattices concerning
quadratic elements of ${\mathbb Q}_p$.
Browkin \cite{Browkin} has proposed some $p$-adic continued fraction algorithms;
nevertheless, the periodicity has not been proved for the
continued fractions obtained by applying his algorithms to quadratic
elements of ${\mathbb Q}_p$.
By disclosing a link between the hermitian canonical forms of certain
integral matrices and $p$-adic numbers, Tamura \cite{Tamura} has
shown that a multidimensional periodic continued fraction converges
to $\left(\alpha, \alpha^2,\dots,\alpha^{n-1} \right)$ in the $p$-adic
sense without considering algorithms of continued fraction expansion,
where $\alpha$ is the root of a polynomial in ${\mathbb Z}[X]$ of
degree $n$ stated in \lemref{HenselLemma}.
We can summarize the above situation as follows:
There have been proposed several $p$-adic continued fraction
algorithms.
However, it remains quite unclear whether or not there exists a simple
algorithm that generates periodic continued fractions for all the
algebraic elements of ${\mathbb Q}_p$ of fixed degree greater than
one.
Even for quadratic elements, $p$-adic versions of Lagrange's theorem
have not been found.

The main objectives of this paper are to define some
algorithms generating continued fractions of the form
\begin{equation}\label{eq:IntroInfiniteContinuedFraction}
d_{0}+ \cfrac{t_{1} p^{k_{1}}}{d_{1}+
    \cfrac{t_{2} p^{k_{2}}}{d_{2}+
      \cfrac{t_{3} p^{k_{3}}}{
     \ddots}}} \quad (k_{n} \in {\mathbb Z}_{> 0},
  ~ t_{n}, d_{n} \in {\mathbb Z} \setminus p{\mathbb Z} ~ (n \ge 1))
\end{equation}
with
$d_{0} \in {\mathbb Q}_p$ such that $d_{0}=\left[ d_{0} \right]$
(see \eqref{eq:DefinitionIntegralFractionalParts} for the definition
of the integral part $\left[ \alpha \right]$ of $\alpha \in {\mathbb
Q}_p$)
and to give
\begin{enumerate}[(i)]
\item $p$-adic versions of Lagrange's theorem for the three
  algorithms, i.e., the periodicity of the resulting continued
  fractions for quadratic elements of ${\mathbb Q}_p$ over ${\mathbb
  Q}$, and
\item $p$-adic versions of Galois' theorem concerning purely periodic
continued fractions.
\end{enumerate}
Moreover, we show that the continued fraction expansions of an
arbitrary rational number always terminate by our algorithms.

It is worth mentioning that our algorithms have a common background
with those proposed in \cite{TY1,TY2,TY3,TY4,FISTY} in the design of
continued fraction algorithms.

The rest of this paper is organized as follows.
In Section~\ref{sec:PadicContinuedFractionExpansion}, we consider
expanding $\alpha \in {\mathbb Q}_p$ into continued fractions whose
form is more general than the form \eqref{eq:IntroInfiniteContinuedFraction}.
In Section~\ref{sec:ConvergenceContinuedFractions}, we establish
convergence properties of the continued fractions introduced in
Section~\ref{sec:PadicContinuedFractionExpansion}.
In Section~\ref{sec:TwoBasicMaps}, we give two basic maps $T_{i}$
($i = 1, 2$) and present related lemmas.
We define three algorithms in terms of these basic
maps in Section~\ref{sec:ContinuedFractionAlgorithms}.
In Section~\ref{sec:ExpansionsQuadraticHenselRoots}, we show that each
of our algorithms gives an eventually periodic expansion for every
quadratic Hensel root, i.e., every quadratic element of ${\mathbb
Q}_p$ over ${\mathbb Q}$ whose existence is guaranteed by Hensel's
Lemma.
We do the same for every quadratic element in
Section~\ref{sec:ExpansionsQuadraticElementsQp}.
In Sections~\ref{sec:ExpansionsRationalHenselRoots} and
\ref{sec:ExpansionsRationalNumbers}, we show that the
continued fractions for every rational number obtained by our
algorithms always terminate.
We conclude with several remarks in
Section~\ref{sec:ConcludingRemarks}.

\section{$p$-adic continued fraction expansions}\label{sec:PadicContinuedFractionExpansion}

In what follows, $\alpha$ denotes an element of ${\mathbb Q}_p$ unless
otherwise mentioned.
We mean by the $p$-adic expansion of $\alpha$ the series
\begin{equation*}
\alpha=\sum_{i=-\infty}^{\infty} e_{i} p^{i} \quad \left(e_{i}=e_{i}(\alpha) \in \left\{0, 1, \dots, p-1 \right\}\right)
\end{equation*}
with $e_{i} \neq 0$ at most finitely many $i \le 0$.
We define the integral and fractional parts of $\alpha$, denoted by
$\left[ \alpha \right]$ and $\left\langle \alpha \right\rangle$
respectively, as
\begin{equation}\label{eq:DefinitionIntegralFractionalParts}
\left[ \alpha \right] := \sum_{i=-\infty}^{0} e_{i} p^{i}
\quad
\text{and} \quad \left\langle \alpha \right\rangle :=
\sum_{i=1}^{\infty} e_{i} p^{i}.
\end{equation}
In this section, we consider expanding $\alpha$ into a continued
fraction of the form
\begin{equation*}
d_{0}+ \cfrac{t_{1} p^{k_{1}}}{d_{1}+
    \cfrac{t_{2} p^{k_{2}}}{d_{2}+
      \cfrac{t_{3} p^{k_{3}}}{
     \ddots}}} \quad (k_{i} \in {\mathbb Z}_{> 0},
  ~ t_{i}, d_{i} \in {\mathbb Z}_p \setminus p{\mathbb Z}_p ~ (i \ge 1))
\end{equation*}
with
$d_{0} \in {\mathbb Q}_p$ such that $d_{0}=\left[ d_{0} \right]$.
Note that this class of continued fractions contains ones of the form
\eqref{eq:IntroInfiniteContinuedFraction}.

Let $t$ be a map from $p{\mathbb Z}_p \setminus \left\{ 0 \right\}$ to
${\mathbb Z}_p \setminus p{\mathbb Z}_p$.
Then, $\frac{t(x) p^{v_p(x)}}{x} \in {\mathbb Z}_p \setminus p{\mathbb
Z}_p$ for all $x \in p{\mathbb Z}_p \setminus \left\{ 0 \right\}$,
where $v_p(\alpha)$ denotes the
$p$-adic additive valuation of $\alpha$.
We consider a family of the maps of the form 
\begin{equation}\label{eq:T}
\begin{split}
&T: p{\mathbb Z}_p \setminus \left\{ 0 \right\} \rightarrow
p{\mathbb Z}_p,\\
&T(x):=\frac{t(x) p^{v_p(x)}}{x} - d(x),
\end{split}
\end{equation}
where $d$ is a map from $p{\mathbb Z}_p \setminus \left\{ 0 \right\}$
to ${\mathbb Z}_p \setminus p{\mathbb Z}_p$.
Since $d(x)=\frac{t(x) p^{v_p(x)}}{x} - T(x)$ and $T(x) \in p{\mathbb
Z}_p$, we have $\left[ d(x) \right] = \left[ \frac{t(x)
p^{v_p(x)}}{x} \right] \in \left\{1, \dots, p-1 \right\}$ for
all $x \in p{\mathbb Z}_p \setminus \left\{ 0 \right\}$.
Hence, $d$ is uniquely determined if the image of $d$, denoted by
$\text{Im}(d)$, satisfies
$\text{Im}(d) \subset \left\{1, \dots, p-1 \right\}$.

Since 
\begin{equation*}
x=\frac{t(x) p^{v_p(x)}}{d(x) + T(x)},
\end{equation*}
we have
\begin{equation*}
T^{n-1}(\left\langle \alpha \right\rangle)=\frac{t(T^{n-1}(\left\langle \alpha \right\rangle)) p^{v_p(T^{n-1}(\left\langle \alpha \right\rangle))}}{d(T^{n-1}(\left\langle \alpha \right\rangle)) + T^{n}(\left\langle \alpha \right\rangle)},
\end{equation*}
provided that $T^{n-1}(\left\langle \alpha \right\rangle) \neq 0$ ($n
\in {\mathbb Z}_{> 0}$).
Setting
\begin{align*}
t_{i} &=t\left( T^{i-1}(\left\langle \alpha \right\rangle) \right),\\
k_{i} &=v_p\left( T^{i-1}(\left\langle \alpha \right\rangle) \right),\\
d_{i} &=d\left( T^{i-1}(\left\langle \alpha \right\rangle) \right),
\end{align*}
for $i \in \left\{1, \dots, n \right\}$, we have
\begin{equation*}
\alpha=\left[ \alpha \right]+
  \cfrac{t_{1} p^{k_{1}}}{d_{1}+
    \cfrac{t_{2} p^{k_{2}}}{d_{2}+
      \cfrac{t_{3} p^{k_{3}}}{\quad\ddots
        \rule{0pt}{6ex} +
       \cfrac{t_{n-1} p^{k_{n-1}}}{d_{n-1}+
        \cfrac{t_{n} p^{k_{n}}}{d_{n}+ T^{n}(\left\langle \alpha \right\rangle)}
  }}}}.
\end{equation*}

Related to the continued fraction expansion of $\alpha$, there occur
three cases:
\begin{enumerate}[(i)]
\item $\left\langle \alpha \right\rangle = 0$.

We do not expand $\left\langle \alpha \right\rangle = 0$, and we have
$\alpha=\left[ \alpha \right]$.  

\item There exists $N \in {\mathbb Z}_{> 0}$ such that
$T^{N}(\left\langle \alpha \right\rangle) = 0$
and
$T^{n}(\left\langle \alpha \right\rangle) \neq 0$ for all $0 \le n
<N$.

We can expand $\alpha$ into the finite continued fraction
\begin{equation}\label{eq:FiniteContinuedFraction}
\alpha=\left[ \alpha \right]+
  \cfrac{t_{1} p^{k_{1}}}{d_{1}+
    \cfrac{t_{2} p^{k_{2}}}{d_{2}+
      \cfrac{t_{3} p^{k_{3}}}{\quad\ddots
       \rule{0pt}{6ex} +\cfrac{t_{N} p^{k_{N}}}{d_{N}}
  }}}.
\end{equation}

\item $T^{n}(\left\langle \alpha \right\rangle) \neq 0$ for all $n \in
  {\mathbb Z}_{\ge 0}$.

We can expand $\alpha$ into the infinite continued fraction
\begin{equation}\label{eq:InfiniteContinuedFraction}
\left[ \alpha \right]+
  \cfrac{t_{1} p^{k_{1}}}{d_{1}+
    \cfrac{t_{2} p^{k_{2}}}{d_{2}+
      \cfrac{t_{3} p^{k_{3}}}{
     \ddots}}}.
\end{equation}
We will show that the continued fraction
\eqref{eq:InfiniteContinuedFraction} converges to $\alpha$ in the succeeding
section.
\end{enumerate}

\begin{rem}
~
\begin{enumerate}[(i)]
\item We can consider a variety of maps $T$.
In fact, we will give three algorithms of continued fraction expansion
in Section~\ref{sec:ContinuedFractionAlgorithms};
consequently, the expression in continued fractions
\eqref{eq:FiniteContinuedFraction} and
\eqref{eq:InfiniteContinuedFraction} are not uniquely determined for a
given $\alpha \in {\mathbb Q}_p$.

\item The continued fractions considered by Schneider \cite{S} are
generated by setting $t(x) \equiv 1$ and
$\text{Im}(d) \subset \left\{1, \dots, p-1
\right\}$.
\end{enumerate}
\end{rem}

\section{Convergence of continued fractions}\label{sec:ConvergenceContinuedFractions}

In this section, we show that the continued fraction described in
Section~\ref{sec:PadicContinuedFractionExpansion} always converges to
$\alpha$ for $\alpha \in {\mathbb Q}_p$.
Without loss of generality, we may assume that $\alpha \in p{\mathbb
Z}_p \setminus \left\{ 0 \right\}$ in this section.

We define two sequences $\left\{ p_{n} \right\}_{n \ge -1}$ and
$\left\{ q_{n} \right\}_{n \ge -1}$ in terms of $t_{i}$, $k_{i}$,
$d_{i}$ in \eqref{eq:InfiniteContinuedFraction}
by the following recursion
formulas:
\begin{equation*}
\left\{
\begin{aligned}
&p_{-1}=1,& &p_{0}=0,& &p_{n}=d_{n}p_{n-1}+t_{n} p^{k_{n}} p_{n-2}& &(n \ge 1),\\
&q_{-1}=0,& &q_{0}=1,& &q_{n}=d_{n}q_{n-1}+t_{n} p^{k_{n}} q_{n-2}& &(n \ge 1).
\end{aligned}
\right.
\end{equation*}
In the case of the finite expansion
\eqref{eq:FiniteContinuedFraction}, we define $p_{n}$ and $q_{n}$ for
$n$ with $-1 \le n \le N$.

Lemmas~\ref{RationalApproximation}--\ref{PQ_MINUS_PQ} given below are
easily seen (cf. \cite{Perron}).

\begin{lem}\label{RationalApproximation}
\begin{equation*}%\label{eq:RationalApproximation}
\cfrac{t_{1} p^{k_{1}}}{d_{1}+
    \cfrac{t_{2} p^{k_{2}}}{d_{2}+
      \cfrac{t_{3} p^{k_{3}}}{\quad\ddots
       \rule{0pt}{6ex} +\cfrac{t_{n} p^{k_{n}}}{d_{n}}
}}}
=
\frac{p_{n}}{q_{n}} \quad \left(n \ge 1\right).
\end{equation*}
\end{lem}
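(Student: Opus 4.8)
The plan is to prove the identity by induction on $n$, in the spirit of the classical theory of continuants (cf.\ \cite{Perron}). Since the assertion is, after setting $u_{i}:=t_{i}p^{k_{i}}$, an algebraic identity in $d_{1},\dots,d_{n},u_{1},\dots,u_{n}$, I would first establish it at the level of rational expressions --- where the only requirement is that the denominators occurring do not vanish --- and then specialise to the $p$-adic setting. This way one need not verify that the auxiliary partial quotient introduced in the induction still lies in ${\mathbb Z}_{p}\setminus p{\mathbb Z}_{p}$.

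For $n=1$ the recursion gives $p_{1}=d_{1}p_{0}+t_{1}p^{k_{1}}p_{-1}=t_{1}p^{k_{1}}$ and $q_{1}=d_{1}q_{0}+t_{1}p^{k_{1}}q_{-1}=d_{1}$, so both sides equal $t_{1}p^{k_{1}}/d_{1}$. For the inductive step, the key observation is that the length-$n$ continued fraction on the left coincides with the length-$(n-1)$ continued fraction obtained from it by replacing the last partial quotient $d_{n-1}$ by $d_{n-1}+\frac{t_{n}p^{k_{n}}}{d_{n}}$. Applying the induction hypothesis to this shorter continued fraction, and noting that $p_{j},q_{j}$ for $j\le n-2$ do not involve $d_{n-1}$, while the recursion at index $n-1$ turns $p_{n-1},q_{n-1}$ into $p_{n-1}+\frac{t_{n}p^{k_{n}}}{d_{n}}p_{n-2}$ and $q_{n-1}+\frac{t_{n}p^{k_{n}}}{d_{n}}q_{n-2}$ respectively, one obtains
\begin{equation*}
\cfrac{t_{1} p^{k_{1}}}{d_{1}+\cfrac{t_{2} p^{k_{2}}}{\ddots+\cfrac{t_{n} p^{k_{n}}}{d_{n}}}}
=\frac{p_{n-1}+\frac{t_{n}p^{k_{n}}}{d_{n}}p_{n-2}}{q_{n-1}+\frac{t_{n}p^{k_{n}}}{d_{n}}q_{n-2}}
=\frac{d_{n}p_{n-1}+t_{n}p^{k_{n}}p_{n-2}}{d_{n}q_{n-1}+t_{n}p^{k_{n}}q_{n-2}}
=\frac{p_{n}}{q_{n}},
\end{equation*}
the last equality being the defining recursion at index $n$.

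I do not expect any real obstacle; the only point requiring a little care is well-definedness --- that every partial denominator appearing in the left-hand continued fraction, as well as $q_{n}$ itself, is nonzero, so that both sides make sense --- which is exactly why I would argue first with rational expressions and then specialise. An alternative, induction-free route is to record the matrix identity
\begin{equation*}
\prod_{i=1}^{n}\begin{pmatrix} 0 & t_{i} p^{k_{i}} \\ 1 & d_{i}\end{pmatrix}
=\begin{pmatrix} p_{n-1} & p_{n} \\ q_{n-1} & q_{n}\end{pmatrix},
\end{equation*}
which follows at once from the recursion formulas for $p_{n}$ and $q_{n}$, together with the remark that the left-hand continued fraction is the composition of the fractional linear (M\"obius) transformations $x\mapsto\frac{t_{i}p^{k_{i}}}{d_{i}+x}$ $(i=1,\dots,n)$ evaluated at $x=0$; reading off the relevant entries of the product matrix then yields $p_{n}/q_{n}$ directly.
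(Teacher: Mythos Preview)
Your proof is correct; it is the classical induction argument for continuants, which is precisely what the paper defers to by writing that the lemma is ``easily seen (cf.\ \cite{Perron})'' without giving any details. There is nothing to compare---the paper omits the proof entirely, and your argument is the standard one it references.
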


%\begin{proof}
%The proof is by induction on $n$.
%The case $n = 1$ is trivial.
%Assume that the equality holds for $n$.
%By substituting $d_{n} + t_{n+1} p^{k_{n+1}}/d_{n+1}$ for $d_{n}$ of
%$p_{n}/q_{n}=(d_{n}p_{n-1}+t_{n} p^{k_{n}}
%p_{n-2})/(d_{n}q_{n-1}+t_{n} p^{k_{n}} q_{n-2})$,
%we obtain the equality for $n+1$.
%\end{proof}

\begin{lem}\label{AlphaAndTn}
\begin{equation*}
\alpha = \frac{p_{n} + T^{n}(\alpha) p_{n-1}}{q_{n} + T^{n}(\alpha) q_{n-1}} \quad \left(n \ge 1\right).
\end{equation*}
\end{lem}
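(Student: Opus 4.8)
The plan is to argue by induction on $n$, in the standard continued-fraction style (cf.\ \cite{Perron}), using the single-step identity already recorded in Section~\ref{sec:PadicContinuedFractionExpansion}. Since $\alpha \in p{\mathbb Z}_p \setminus \{0\}$ we have $\langle \alpha \rangle = \alpha$, so for every $n$ with $T^{n-1}(\alpha) \neq 0$ the construction gives
\begin{equation*}
T^{n-1}(\alpha) = \frac{t_n p^{k_n}}{d_n + T^n(\alpha)}.
\end{equation*}
First, the base case $n=1$: this identity reads $\alpha = t_1 p^{k_1}/(d_1 + T(\alpha))$, and from $p_{-1}=1$, $p_0=0$, $q_{-1}=0$, $q_0=1$ the recursion yields $p_1 = t_1 p^{k_1}$, $q_1 = d_1$; hence $(p_1 + T(\alpha)p_0)/(q_1 + T(\alpha)q_0) = t_1 p^{k_1}/(d_1+T(\alpha)) = \alpha$, as claimed.

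For the inductive step I would assume the formula for some $n \ge 1$ (and, in the finite case \eqref{eq:FiniteContinuedFraction}, that $n < N$, so $T^n(\alpha) \neq 0$). Substituting $T^n(\alpha) = t_{n+1}p^{k_{n+1}}/(d_{n+1} + T^{n+1}(\alpha))$ into the induction hypothesis and clearing the inner denominator by multiplying numerator and denominator by $d_{n+1} + T^{n+1}(\alpha)$ gives
\begin{equation*}
\alpha = \frac{p_n\left(d_{n+1} + T^{n+1}(\alpha)\right) + t_{n+1}p^{k_{n+1}} p_{n-1}}{q_n\left(d_{n+1} + T^{n+1}(\alpha)\right) + t_{n+1}p^{k_{n+1}} q_{n-1}}.
\end{equation*}
Regrouping the numerator as $\left(d_{n+1}p_n + t_{n+1}p^{k_{n+1}}p_{n-1}\right) + T^{n+1}(\alpha)p_n$ and likewise for the denominator, then invoking the recursion formulas for $p_{n+1}$ and $q_{n+1}$, turns the right-hand side into $\left(p_{n+1} + T^{n+1}(\alpha)p_n\right)/\left(q_{n+1} + T^{n+1}(\alpha)q_n\right)$, which closes the induction. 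In the finite case, taking $n=N$ and using $T^N(\alpha)=0$ recovers $\alpha = p_N/q_N$, consistent with \lemref{RationalApproximation}.

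The only point needing a word of care is that all the fractions above are legitimate, i.e.\ that $q_n + T^n(\alpha)q_{n-1} \neq 0$. I would dispatch this with a short valuation argument: since each $d_n \in {\mathbb Z}_p \setminus p{\mathbb Z}_p$ and each $k_n \ge 1$, an immediate induction on the recursion $q_n = d_n q_{n-1} + t_n p^{k_n} q_{n-2}$ shows $q_n \in {\mathbb Z}_p$ and $v_p(q_n)=0$ for all $n \ge 0$; as $T^n(\alpha) \in p{\mathbb Z}_p$ we get $v_p\left(T^n(\alpha)q_{n-1}\right) \ge 1 > 0 = v_p(q_n)$, so $v_p\left(q_n + T^n(\alpha)q_{n-1}\right)=0$ and the denominator is nonzero. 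Alternatively one can avoid divisions entirely and prove the cleared form $\alpha\left(q_n + T^n(\alpha)q_{n-1}\right) = p_n + T^n(\alpha)p_{n-1}$ by the same induction. Since there is no real obstacle here, I expect the write-up to amount to essentially this computation.
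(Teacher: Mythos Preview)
Your proof is correct and is exactly the standard continued-fraction induction the paper has in mind: the authors give no argument at all for \lemref{AlphaAndTn}, merely stating that Lemmas~\ref{RationalApproximation}--\ref{PQ_MINUS_PQ} ``are easily seen (cf.\ \cite{Perron})''. Your valuation check that $q_n + T^n(\alpha)q_{n-1} \neq 0$ is a nice touch; note that it is essentially the content of \lemref{AbsoluteValueQ}, which the paper proves separately just after.
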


%\begin{proof}
%We obtain the formula from \eqref{eq:RationalApproximation},
%by substituting $d_{n} + T^{n}(\alpha)$ for $d_{n}$.
%\end{proof}

\begin{lem}\label{PQ_MINUS_PQ}
\begin{equation*}
p_{n-1} q_{n} - p_{n} q_{n-1} = \prod_{i=1}^{n} \left(- t_{i} p^{k_{i}} \right) \quad \left(n \ge 1\right).
\end{equation*}
\end{lem}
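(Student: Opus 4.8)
The plan is to prove the identity by induction on $n$, in exactly the way one proves the classical determinant relation $p_{n-1}q_n - p_n q_{n-1} = (-1)^n$ for ordinary continued fractions (cf. \cite{Perron}); here the numerators $t_i p^{k_i}$ simply play the role of the $1$'s in the classical recursion. The argument is purely formal: it manipulates only the defining recursion for $\{p_n\}$ and $\{q_n\}$ and never uses any property of ${\mathbb Q}_p$ or of the maps $t$, $d$.

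For the base case $n=1$, I would substitute the initial values $p_{-1}=1$, $p_0=0$, $q_{-1}=0$, $q_0=1$ into the recursion to obtain $p_1 = d_1 p_0 + t_1 p^{k_1} p_{-1} = t_1 p^{k_1}$ and $q_1 = d_1 q_0 + t_1 p^{k_1} q_{-1} = d_1$, so that $p_0 q_1 - p_1 q_0 = -t_1 p^{k_1} = \prod_{i=1}^{1}\left(-t_i p^{k_i}\right)$. For the inductive step, assuming the identity for $n$, I would compute $p_n q_{n+1} - p_{n+1} q_n$ by inserting $p_{n+1} = d_{n+1}p_n + t_{n+1}p^{k_{n+1}}p_{n-1}$ and $q_{n+1} = d_{n+1}q_n + t_{n+1}p^{k_{n+1}}q_{n-1}$; the two terms carrying the factor $d_{n+1}$ cancel, and what remains is $t_{n+1}p^{k_{n+1}}\left(p_n q_{n-1} - p_{n-1}q_n\right) = -t_{n+1}p^{k_{n+1}}\left(p_{n-1}q_n - p_n q_{n-1}\right)$. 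Applying the induction hypothesis then yields $\prod_{i=1}^{n+1}\left(-t_i p^{k_i}\right)$, which closes the induction. The same computation applies verbatim for $1 \le n \le N$ in the case of the finite expansion \eqref{eq:FiniteContinuedFraction}.

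There is essentially no obstacle to overcome; the only point requiring a moment's attention is the sign bookkeeping, namely keeping straight whether one is evaluating $p_{n-1}q_n - p_n q_{n-1}$ or its negative at each stage of the recursive step, which is why the product on the right-hand side is written with the sign absorbed into each factor $-t_i p^{k_i}$ rather than as an overall $(-1)^n$.
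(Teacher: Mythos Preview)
Your proof is correct and is exactly the classical induction argument the paper has in mind; indeed, the paper does not spell out a proof but simply notes that Lemmas~\ref{RationalApproximation}--\ref{PQ_MINUS_PQ} ``are easily seen (cf.~\cite{Perron})''. Your base case and inductive step are both fine, and your remark that the argument is purely formal in the recursion is on point.
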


%\begin{proof}
%We easily see that $p_{-1} q_{0} - p_{0} q_{-1} = 1$ and
%\begin{equation*}
%p_{n-1} q_{n} - p_{n} q_{n-1} = - t_{n} p^{k_{n}} \left( p_{n-2} q_{n-1} - p_{n-1} q_{n-2} \right) \quad \left(n \ge 1\right),
%\end{equation*}
%and we have the formula.
%\end{proof}

We denote by $\lvert \alpha \rvert_p$ the $p$-adic absolute value of
$\alpha \in {\mathbb Q}_p$, i.e., $\lvert \alpha \rvert_p := 1/p^{v_p(\alpha)}$.

\begin{lem}\label{AbsoluteValueQ}
\begin{equation*}
\lvert q_{n} \rvert_p = 1 \quad \left(n \ge 0\right).
\end{equation*}
\end{lem}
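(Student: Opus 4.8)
The plan is to prove $\lvert q_n \rvert_p = 1$ by induction on $n$, reading the recursion $q_n = d_n q_{n-1} + t_n p^{k_n} q_{n-2}$ modulo $p\mathbb{Z}_p$. First I would check the base cases: $q_0 = 1$, so $\lvert q_0 \rvert_p = 1$ trivially, and $q_1 = d_1 q_0 + t_1 p^{k_1} q_{-1} = d_1$, which lies in $\mathbb{Z}_p \setminus p\mathbb{Z}_p$ by the standing assumption on the $d_i$, hence $\lvert q_1 \rvert_p = 1$. For the inductive step, suppose $\lvert q_{n-1} \rvert_p = 1$ and $\lvert q_{n-2} \rvert_p = 1$ for some $n \ge 2$. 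Since $k_n \ge 1$, the term $t_n p^{k_n} q_{n-2}$ lies in $p\mathbb{Z}_p$, so $q_n \equiv d_n q_{n-1} \pmod{p\mathbb{Z}_p}$.

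The key point is then that $d_n q_{n-1}$ is a unit: $d_n \in \mathbb{Z}_p \setminus p\mathbb{Z}_p$ and $q_{n-1} \in \mathbb{Z}_p \setminus p\mathbb{Z}_p$ (the latter by the induction hypothesis, together with the fact that $q_{n-1} \in \mathbb{Z}_p$ since all the $d_i$, $t_i$, $p^{k_i}$ are in $\mathbb{Z}_p$ and the $q_i$ are built from them by the recursion). Because $\mathbb{Z}_p \setminus p\mathbb{Z}_p$ is closed under multiplication, $d_n q_{n-1} \in \mathbb{Z}_p \setminus p\mathbb{Z}_p$, i.e. $\lvert d_n q_{n-1} \rvert_p = 1$. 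Adding the element $t_n p^{k_n} q_{n-2}$ of absolute value $\le 1/p < 1$ does not change the absolute value, by the ultrametric inequality (the strict form, valid when the two summands have different valuations): $\lvert q_n \rvert_p = \lvert d_n q_{n-1} + t_n p^{k_n} q_{n-2} \rvert_p = \max\{1, \lvert t_n p^{k_n} q_{n-2}\rvert_p\} = 1$. This closes the induction.

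The only thing requiring a word of care — and the nearest thing to an obstacle — is making sure that each $q_n$ genuinely lies in $\mathbb{Z}_p$ (not merely $\mathbb{Q}_p$) so that "$\lvert q_n \rvert_p = 1$" and "$q_n \notin p\mathbb{Z}_p$" are equivalent and the induction hypothesis can be fed back in; this is immediate from the recursion since $\mathbb{Z}_p$ is a ring containing all $d_i$, $t_i$ and $p^{k_i}$. One should also note that the argument is uniform whether the expansion is finite (case \eqref{eq:FiniteContinuedFraction}, $0 \le n \le N$) or infinite, since the recursion for $q_n$ is the same in both cases. No deeper input is needed; in particular, the convergence of the continued fraction is not used here — indeed this lemma is one of the ingredients toward it.
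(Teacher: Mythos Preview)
Your proof is correct and follows essentially the same route as the paper's: induction on $n$ with base cases $n=0,1$, then the ultrametric inequality applied to $q_n = d_n q_{n-1} + t_n p^{k_n} q_{n-2}$, using that $\lvert d_n q_{n-1}\rvert_p = 1$ while $\lvert t_n p^{k_n} q_{n-2}\rvert_p \le 1/p$. Your write-up simply makes explicit a few points (that $q_n \in \mathbb{Z}_p$, the strict form of the ultrametric inequality) that the paper leaves implicit.
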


\begin{proof}
The claim is true for $n = 0$ and $n = 1$.
Assuming that $\lvert q_{i} \rvert_p = 1$ holds for $0 \le i \le n$
with $n \ge 1$,
we have $\lvert q_{n+1} \rvert_p = \lvert d_{n+1}q_{n}+t_{n+1}
p^{k_{n+1}} q_{n-1} \rvert_p =1$ since $\lvert d_{n+1}q_{n} \rvert_p =
1$ and $\lvert t_{n+1} p^{k_{n+1}} q_{n-1} \rvert_p \le 1/p$.
\end{proof}

\begin{thm}\label{ConvergenceTheorem}
~
\begin{enumerate}[(i)]
\item Let $n$ be an integer with $n \ge 1$ or an integer with $1 \le n
\le N$ if there exists an integer $N \ge 1$ such that $T^{N}( \alpha
) = 0$.
Then,
\begin{equation*}
\left\lvert \alpha - \frac{p_{n}}{q_{n}} \right\rvert_p =
\frac{\left\lvert T^{n}(\alpha) \right\rvert_p }{p^{\sum_{i=1}^{n} k_{i}}}
\end{equation*}
holds.
In particular,
\begin{equation*}
\left\lvert \alpha - \frac{p_{n}}{q_{n}} \right\rvert_p =
\frac{ 1 }{p^{\sum_{i=1}^{n+1} k_{i}}}
\end{equation*}
holds if $T^{n}(\alpha) \neq 0$.

\item Let $T^{n}( \alpha ) \neq 0$ for all $n \ge 1$.
Then,
\begin{equation*}
\lim_{n \to \infty} \frac{p_{n}}{q_{n}}  = \alpha
\end{equation*}
holds.
\end{enumerate}
\end{thm}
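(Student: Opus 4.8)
The plan is to obtain part~(i) as a direct computation built on \lemref{AlphaAndTn}, \lemref{PQ_MINUS_PQ}, and \lemref{AbsoluteValueQ}, and then to read off part~(ii) as an immediate corollary. First I would use \lemref{AlphaAndTn} to write $\alpha = (p_n + T^n(\alpha)p_{n-1})/(q_n + T^n(\alpha)q_{n-1})$, subtract $p_n/q_n$, and put the difference over the common denominator $q_n(q_n + T^n(\alpha)q_{n-1})$. The numerator collapses after cancellation to $T^n(\alpha)(p_{n-1}q_n - p_n q_{n-1})$, which by \lemref{PQ_MINUS_PQ} equals $T^n(\alpha)\prod_{i=1}^{n}(-t_i p^{k_i})$, so that
\begin{equation*}
\alpha - \frac{p_n}{q_n} = \frac{T^n(\alpha)\,\prod_{i=1}^{n}\left(-t_i p^{k_i}\right)}{q_n\left(q_n + T^n(\alpha)q_{n-1}\right)}.
\end{equation*}

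The key step is then to evaluate the $p$-adic absolute value of the right-hand side. Since $t_i \in {\mathbb Z}_p \setminus p{\mathbb Z}_p$ we have $\lvert t_i \rvert_p = 1$, hence $\lvert \prod_{i=1}^{n}(-t_i p^{k_i}) \rvert_p = p^{-\sum_{i=1}^{n}k_i}$. By \lemref{AbsoluteValueQ}, $\lvert q_n \rvert_p = \lvert q_{n-1} \rvert_p = 1$; moreover $T^n(\alpha) \in p{\mathbb Z}_p$ forces $\lvert T^n(\alpha)q_{n-1} \rvert_p \le 1/p < 1 = \lvert q_n \rvert_p$, so the ultrametric inequality gives $\lvert q_n + T^n(\alpha)q_{n-1} \rvert_p = 1$; in particular this denominator is nonzero, so the displayed identity is legitimate. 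Taking absolute values therefore yields $\lvert \alpha - p_n/q_n \rvert_p = \lvert T^n(\alpha) \rvert_p / p^{\sum_{i=1}^{n}k_i}$, the first assertion of~(i). For the ``in particular'' part, note that since $\alpha \in p{\mathbb Z}_p$ we have $\langle \alpha \rangle = \alpha$, so $k_{n+1} = v_p(T^n(\alpha))$; thus when $T^n(\alpha) \neq 0$ one has $\lvert T^n(\alpha) \rvert_p = p^{-k_{n+1}}$, and the exponent becomes $\sum_{i=1}^{n+1}k_i$.

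For part~(ii), I would simply observe that each $k_i = v_p(T^{i-1}(\alpha)) \ge 1$ because $T^{i-1}(\alpha) \in p{\mathbb Z}_p \setminus \{0\}$, so $\sum_{i=1}^{n+1}k_i \ge n+1 \to \infty$. Combined with the ``in particular'' formula of~(i) (which applies for every $n$ under the hypothesis $T^n(\alpha)\neq 0$ for all $n\ge 1$), this gives $\lvert \alpha - p_n/q_n \rvert_p = p^{-\sum_{i=1}^{n+1}k_i} \to 0$, i.e.\ $\lim_{n\to\infty} p_n/q_n = \alpha$ in ${\mathbb Q}_p$.

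I do not expect a genuine obstacle here: the argument is a short manipulation once Lemmas~\ref{AlphaAndTn}--\ref{AbsoluteValueQ} are available. The one point that must be handled with care is showing that the denominator $q_n + T^n(\alpha)q_{n-1}$ neither vanishes nor contributes to the valuation; this is exactly where the strong triangle inequality, together with $T^n(\alpha)\in p{\mathbb Z}_p$ and $\lvert q_n\rvert_p=1$, is indispensable. A secondary bookkeeping point is the passage from the $\lvert T^n(\alpha)\rvert_p$ form to the $p^{-\sum_{i=1}^{n+1}k_i}$ form via the defining relation $k_{n+1}=v_p(T^n(\alpha))$, and the elementary estimate $k_i\ge 1$ that drives the convergence in~(ii).
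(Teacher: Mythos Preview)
Your proposal is correct and follows essentially the same route as the paper's proof: both use \lemref{AlphaAndTn} to form the difference $\alpha - p_n/q_n$, simplify the numerator via \lemref{PQ_MINUS_PQ}, and show the denominator has $p$-adic absolute value $1$ using \lemref{AbsoluteValueQ} together with $T^n(\alpha)\in p{\mathbb Z}_p$; part~(ii) is then immediate from~(i). Your write-up merely makes explicit a couple of points the paper leaves implicit (nonvanishing of $q_n + T^n(\alpha)q_{n-1}$ and the bound $k_i\ge 1$), but there is no substantive difference in approach.
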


\begin{proof}
(i) By \lemref{AlphaAndTn}, we have
\begin{equation*}
\alpha - \frac{p_{n}}{q_{n}} =
\frac{p_{n} + T^{n}(\alpha) p_{n-1}}{q_{n} + T^{n}(\alpha) q_{n-1}} - \frac{p_{n}}{q_{n}} =
\frac{T^{n}(\alpha) \left( p_{n-1} q_{n} - p_{n} q_{n-1} \right)}{\left( q_{n} + T^{n}(\alpha) q_{n-1} \right) q_{n}}.
\end{equation*}
By \lemref{PQ_MINUS_PQ}, we have
\begin{equation*}
\left\lvert T^{n}(\alpha) \left( p_{n-1} q_{n} - p_{n} q_{n-1} \right) \right\rvert_p
= \left\lvert T^{n}(\alpha) \prod_{i=1}^{n} \left(- t_{i} p^{k_{i}} \right) \right\rvert_p
= \frac{\left\lvert T^{n}(\alpha) \right\rvert_p }{p^{\sum_{i=1}^{n} k_{i}}}.
\end{equation*}
By \lemref{AbsoluteValueQ}, we have
\begin{equation*}
\left\lvert \left( q_{n} + T^{n}(\alpha) q_{n-1} \right) q_{n} \right\rvert_p = 1.
\end{equation*}
Hence, we get
\begin{equation*}
\left\lvert \alpha - \frac{p_{n}}{q_{n}} \right\rvert_p =
\frac{\left\lvert T^{n}(\alpha) \right\rvert_p }{p^{\sum_{i=1}^{n} k_{i}}}.
\end{equation*}
If $T^{n}(\alpha) \neq 0$, then $\left\lvert T^{n}(\alpha)
\right\rvert_p = 1 / p^{k_{n+1}}$, which implies
\begin{equation*}
\left\lvert \alpha - \frac{p_{n}}{q_{n}} \right\rvert_p =
\frac{ 1 }{p^{\sum_{i=1}^{n+1} k_{i}}}.
\end{equation*}
The assertion (ii) immediately follows from (i).
\end{proof}

\section{Two basic maps: $T_{1}$ and $T_{2}$}\label{sec:TwoBasicMaps}

We later propose three continued fraction algorithms, each of which
gives an eventually periodic expansion for every quadratic element of
${\mathbb Q}_p$ over ${\mathbb Q}$ and gives a finite expansion for
every rational number.
In this section, we introduce maps $T_{1}$ and $T_{2}$ on the basis of
which we construct the algorithms.

We denote by $A_p$ the set of all the elements of $p{\mathbb Z}_p$
which are algebraic over ${\mathbb Q}$ of degree at most two.
For simplicity, we will abbreviate ``algebraic over ${\mathbb Q}$'' to
``algebraic'', and ``quadratic over ${\mathbb Q}$'' to ``quadratic''.
We mean, by the minimal polynomial of an algebraic element
$\alpha$, the integral polynomial of the lowest degree which
has $\alpha$ as a root, whose leading coefficient is positive, and
whose coefficients are coprime.
We denote the minimal polynomial of $x \in A_p$ by $aX^2 + bX + c$ if $x$ is quadratic.
We denote it by $bX + c$ if $x$ is rational.
Note that $c \neq 0$ if and only if $x \neq 0$.
Let us define a map $u: A_p \setminus \left\{ 0 \right\} \rightarrow
{\mathbb Z} \setminus p{\mathbb Z}$ by assigning
\begin{equation*}
u(x):= c \lvert c
\rvert_p \in {\mathbb Z} \setminus p{\mathbb Z}
\end{equation*}
to each $x \in A_p
\setminus \left\{ 0 \right\}$.
We define two maps $T_{1}$ and $T_{2}$ from $A_p \setminus \left\{ 0
\right\}$ to $A_p$ by
\begin{align*}
T_{1}(x) &:=\frac{u(x) p^{v_p(x)}}{x} - d_{1}(x),\\
\intertext{and}
T_{2}(x) &:=\frac{-u(x) p^{v_p(x)}}{x} - d_{2}(x),
\end{align*}
where $d_{1}$ and $d_{2}$ are maps from $A_p \setminus \left\{ 0
\right\}$ to $\left\{1, \dots, p-1 \right\}$ which are
uniquely defined so as to let $T_{1}(x)$ and $T_{2}(x)$ belong to $p{\mathbb
Z}_p$, and thus to $A_p$, for every $x \in A_p \setminus \left\{ 0 \right\}$.
It is clear that $T_{1}$ and $T_{2}$ map any quadratic element of $A_p$
to a quadratic one.
We remark that $T_{1}$ and $T_{2}$ belong to the family of the maps
\eqref{eq:T} if we ignore their domains.

Our algorithms introduced in the next section reduce expansions of
algebraic elements of ${\mathbb Q}_p$ of degree at most two
to expansions of those of $p{\mathbb Z}_p$ whose
existence is guaranteed by the following well-known lemma.

\begin{lem}[Hensel's Lemma]\label{HenselLemma}
Let $f(X):=X^{n} + a_{n-1} X^{n-1} + \dots + a_{1} X + a_{0} \in {\mathbb Z}[X]$,
where $n \in {\mathbb Z}_{> 0}$, $a_{1} \in {\mathbb Z} \setminus p{\mathbb Z}$,
and $a_{0} \in p{\mathbb Z}$.
Then, there exists unique $\alpha \in p{\mathbb Z}_p$ such that
$f(\alpha)=0$.
\end{lem}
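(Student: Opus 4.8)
The plan is to prove Hensel's Lemma in the stated form — existence and uniqueness of a root $\alpha \in p\mathbb{Z}_p$ of $f$ — by the standard Newton-type successive approximation, exploiting the hypotheses $a_0 \in p\mathbb{Z}$ and $a_1 \in \mathbb{Z}\setminus p\mathbb{Z}$ which guarantee $f(0) \equiv 0 \pmod p$ and $f'(0) = a_1 \not\equiv 0 \pmod p$. First I would observe that $X = 0$ is an approximate root: $f(0) = a_0 \equiv 0 \pmod p$ and the derivative $f'(X) = nX^{n-1} + \dots + 2a_2 X + a_1$ satisfies $f'(0) = a_1$, so $|f'(0)|_p = 1$. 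This is precisely the nondegenerate situation in which Hensel's Lemma applies, so the result is really a special case of the general statement; but since the paper wants a self-contained reference, I would give the iteration explicitly.

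The key steps, in order: (1) Define $\alpha_0 := 0$ and inductively $\alpha_{m+1} := \alpha_m - f(\alpha_m)/f'(\alpha_m)$. (2) Prove by induction the two invariants $\alpha_m \in p\mathbb{Z}_p$ (hence $f'(\alpha_m) \equiv a_1 \pmod{p}$, so $f'(\alpha_m)$ is a unit and the iteration is well-defined) and $v_p(f(\alpha_m)) \ge 2^m$. The inductive step for the valuation estimate uses the Taylor expansion $f(\alpha_{m+1}) = f(\alpha_m) + f'(\alpha_m)(\alpha_{m+1}-\alpha_m) + (\text{higher order}) = (\text{terms of order} \ge 2 \text{ in } \alpha_{m+1}-\alpha_m)$, together with $v_p(\alpha_{m+1}-\alpha_m) = v_p(f(\alpha_m)/f'(\alpha_m)) = v_p(f(\alpha_m)) \ge 2^m$. (3) Conclude $\{\alpha_m\}$ is Cauchy in $\mathbb{Z}_p$ — indeed $v_p(\alpha_{m+1}-\alpha_m) \ge 2^m$ — hence converges to some $\alpha \in p\mathbb{Z}_p$ (the set $p\mathbb{Z}_p$ is closed and contains every $\alpha_m$), and by continuity of the polynomial $f$, $f(\alpha) = \lim f(\alpha_m) = 0$ since $v_p(f(\alpha_m)) \to \infty$. (4) For uniqueness, suppose $\alpha, \beta \in p\mathbb{Z}_p$ with $f(\alpha) = f(\beta) = 0$. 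Then $0 = f(\alpha) - f(\beta) = (\alpha-\beta) g(\alpha,\beta)$ where $g(\alpha,\beta) = \sum_{j\ge 1} a_j (\alpha^{j-1} + \alpha^{j-2}\beta + \dots + \beta^{j-1})$ (with $a_n = 1$); since $\alpha, \beta \in p\mathbb{Z}_p$, every term with $j \ge 2$ lies in $p\mathbb{Z}_p$ while the $j=1$ term is $a_1 \in \mathbb{Z}\setminus p\mathbb{Z}$, so $g(\alpha,\beta)$ is a unit, forcing $\alpha = \beta$.

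The main obstacle is mild: it is keeping the bookkeeping in the Taylor-expansion step clean, i.e., writing $f(X + h) = \sum_{i} \frac{f^{(i)}(X)}{i!} h^i$ and checking that the binomial coefficients do not introduce unwanted denominators — here they do not, since $\frac{f^{(i)}(X)}{i!}$ has integer coefficients for a monic integer polynomial (these are the "Hasse derivatives"), so every term with $i \ge 2$ contributes valuation at least $2 v_p(h) = 2 v_p(f(\alpha_m)) \ge 2^{m+1}$, and the $i = 1$ term cancels $f(\alpha_m)$ by the very choice of $\alpha_{m+1}$. Everything else is a routine induction and a limit argument in the complete metric space $\mathbb{Q}_p$. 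Alternatively, to keep the exposition short, one may simply cite a standard reference for Hensel's Lemma and note that the hypotheses give a simple root mod $p$; I would include the one-paragraph iteration argument for completeness but flag it as classical.
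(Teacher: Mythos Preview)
Your proof is correct and entirely standard; the Newton iteration, the quadratic valuation growth $v_p(f(\alpha_m)) \ge 2^m$, and the uniqueness argument via the factorization $f(\alpha)-f(\beta)=(\alpha-\beta)g(\alpha,\beta)$ with $g\equiv a_1\pmod p$ all go through as you describe. There is nothing to compare, however: the paper gives no proof of this lemma at all. It is stated as ``the following well-known lemma'' and immediately used, which is precisely the alternative you anticipated in your final sentence.
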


In what follows, we call an element of $p{\mathbb Z}_p$ a {\it
quadratic Hensel root} if it is a root of $X^{2} + b X + c \in
{\mathbb Z}[X]$ where $b \in {\mathbb Z} \setminus p{\mathbb Z}$, $c
\in p{\mathbb Z}$, and $X^{2} + b X + c$ is irreducible.
Likewise, we call an element of $p{\mathbb Z}_p$ a {\it rational
Hensel root} if it is a root of $X + c \in {\mathbb Z}[X]$ with $c
\in p{\mathbb Z}$ (obviously, the root is $-c$).

\begin{lem}\label{T1AndT2}
$T_{1}$ and $T_{2}$ map every quadratic Hensel root to a quadratic
Hensel root.
\end{lem}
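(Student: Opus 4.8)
The plan is to take a quadratic Hensel root $x \in p\mathbb{Z}_p$ with minimal polynomial $X^2 + bX + c$ (so $b \in \mathbb{Z}\setminus p\mathbb{Z}$, $c \in p\mathbb{Z}$, irreducible) and compute explicitly the minimal polynomial of $T_i(x)$, then check it has the required form. First I would record that for such an $x$ we have $u(x) = c\lvert c\rvert_p$, and write $c = p^{m}c'$ with $m = v_p(c) \ge 1$ and $c' \in \mathbb{Z}\setminus p\mathbb{Z}$, so $u(x) = c' \cdot \operatorname{sign}$ — more precisely $u(x)$ is $c$ divided by $p^m$, a unit in $\mathbb{Z}_p$. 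Also $v_p(x)$: since $x$ is a root of $X^2 + bX + c$ with $b$ a unit and $c \in p\mathbb{Z}$, one has $x(x+b) = -c$, and because $x \in p\mathbb{Z}_p$ the factor $x + b$ is a unit, hence $v_p(x) = v_p(c) = m$. So $u(x)p^{v_p(x)}/x = u(x)p^m/x = c/x = -(x+b)$, which is a unit in $\mathbb{Z}_p$ and lies in $A_p$.

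Next I would substitute. Set $y := T_1(x) = c/x - d_1(x) = -(x+b) - d_1$, writing $d_1 = d_1(x) \in \{1,\dots,p-1\}$. Then $x = -(y + b + d_1)$. Plugging into $x^2 + bx + c = 0$ gives $(y+b+d_1)^2 - b(y+b+d_1) + c = 0$, i.e. $y^2 + (2d_1 + b)y + (d_1^2 + bd_1 + c) = 0$. So $y$ satisfies $Y^2 + BY + C$ with $B = 2d_1 + b$, $C = d_1^2 + bd_1 + c = d_1(d_1 + b) + c$. I then must verify: (a) $B \in \mathbb{Z}\setminus p\mathbb{Z}$; (b) $C \in p\mathbb{Z}$; (c) this quadratic is irreducible over $\mathbb{Q}$; and (d) it is (up to the normalization conventions) the minimal polynomial, so that $T_1(x)$ is genuinely a quadratic Hensel root (not rational). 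For (b): since $y = T_1(x) \in p\mathbb{Z}_p$ by construction of $d_1$, and $y$ is a root of $Y^2 + BY + C$, we get $C = -y(y+B) \in p\mathbb{Z}_p \cap \mathbb{Z} = p\mathbb{Z}$ provided $B$ is a $p$-adic unit; alternatively $C = d_1(d_1+b) + c \equiv d_1(d_1 + b) \pmod p$, and I must argue this is $\equiv 0$. That last congruence is exactly what forces the choice of $d_1$: $d_1$ is defined by $d_1 \equiv [u(x)p^{v_p(x)}/x] = [-(x+b)] \equiv -b \pmod p$ (taking the integral-part representative in $\{1,\dots,p-1\}$), hence $d_1 + b \equiv 0 \pmod p$, giving $C \equiv 0 \pmod p$ as needed. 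For (a): $B = 2d_1 + b \equiv 2d_1 + b \equiv 2(-b) + b = -b \pmod p$ — wait, more carefully, since $d_1 \equiv -b$, $B \equiv -2b + b = -b \not\equiv 0 \pmod p$ because $b$ is a unit. (Here if $p = 2$ one checks $B = 2d_1 + b$ is odd since $b$ is odd, so still a unit; I would remark on this.) For (c): irreducibility of $X^2 + bX + c$ over $\mathbb{Q}$ is equivalent to $b^2 - 4c$ not being a square in $\mathbb{Q}$; the discriminant of $Y^2 + BY + C$ is $B^2 - 4C = (2d_1+b)^2 - 4(d_1^2+bd_1+c) = b^2 - 4c$, the same, so irreducibility is preserved. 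The argument for $T_2$ is identical with $u(x)$ replaced by $-u(x)$: $T_2(x) = -c/x - d_2 = (x+b) - d_2$, giving $x = -(b) + d_2 + T_2(x)$; substituting yields $Y^2 + (b - 2d_2)Y + (d_2^2 - bd_2 + c)$, same discriminant $b^2 - 4c$, and $d_2 \equiv [-(-(x+b))] = [x+b] \equiv b \pmod p$ forces the constant term $\equiv d_2(d_2 - b) \equiv 0 \pmod p$ and the middle coefficient $\equiv -b \not\equiv 0$.

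The main obstacle — really the only subtle point — is pinning down the residue of $d_i(x)$ modulo $p$ from the definition given in the paper, i.e. confirming that $d_1(x) = [u(x)p^{v_p(x)}/x]$ reduces mod $p$ to the class of $-(x+b)$, equivalently $-b$, and likewise $d_2(x) \equiv b$; once that is in hand everything else is routine substitution and the discriminant-invariance observation. I would also note the edge cases $p=2$ (where $2d_i$ vanishes mod $2$ but $b$ odd saves the unit condition) and the need to confirm the resulting polynomial is primitive and not reducible-to-linear, which follows because its discriminant $b^2-4c$ equals that of the minimal polynomial of $x$ and hence is a non-square, so the quadratic is the minimal polynomial after the standard normalization. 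Finally, I would remark that $T_i$ clearly preserves membership in $p\mathbb{Z}_p$ (by the defining property of $d_i$) and in $A_p$ (since degree is preserved), completing the proof that $T_1$ and $T_2$ send quadratic Hensel roots to quadratic Hensel roots.
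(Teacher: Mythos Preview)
Your proof is correct and follows essentially the same route as the paper's: both compute $u(x)p^{v_p(x)}/x = c/x = -(x+b)$ (the paper phrases this as the conjugate $\alpha^\sigma$), pin down $d_i$ from the residue modulo $p$, substitute to obtain the new quadratic, and verify its coefficients satisfy the Hensel-root conditions. The only cosmetic differences are that the paper parametrizes via $r \in \{1,\dots,p-1\}$ with $r \equiv b \pmod p$ (writing $d_1 = p-r$, $d_2 = r$) and leaves irreducibility implicit in the affine substitution, whereas you work directly with $d_i$ and check irreducibility via discriminant invariance.
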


\begin{proof}
Let $\alpha \in p{\mathbb Z}_p$ be an arbitrary quadratic Hensel root.
By definition, $\alpha$ has a minimal polynomial of the form $X^{2} +
b X + c \in {\mathbb Z}[X]$ with $b \in {\mathbb Z} \setminus
p{\mathbb Z}$ and $c \in p{\mathbb Z}$.
We see that the conjugate $\alpha^{\sigma} \neq \alpha$ of $\alpha$ satisfies
$\alpha^{\sigma}\in {\mathbb Z}_p \setminus p{\mathbb Z}_p$ since
$\alpha^{\sigma}= -b -\alpha$.
Since $\alpha=c/\alpha^{\sigma}$, we have $\lvert \alpha \rvert_p=
\lvert c \rvert_p$.
Recalling the definition of $u$, we see that
\begin{equation*}
\frac{u(\alpha) p^{v_p(\alpha)}}{\alpha}=\frac{c}{\alpha}=\alpha^{\sigma}.
\end{equation*}
Let
\begin{equation*}
  \alpha^{\sigma}=\sum_{i=0}^{\infty} e_{i} p^{i}
  \quad \left( e_{i} \in \left\{0, 1, \dots, p-1 \right\}, \quad e_{0} \neq 0 \right).
\end{equation*}
Since $\alpha^{\sigma}$ is a root of $X^{2} + b X + c$, we have $e_{0}
\left( e_{0} + b \right) \equiv 0 \pmod{p}$.
Let $r$ be an element of $\left\{1, \dots, p-1 \right\}$ satisfying $r
\equiv b \pmod{p}$.
Since $e_{0} \neq 0$, we have $e_{0} + b \equiv 0 \pmod{p}$, which
implies $e_{0} = p-r$.
Thus, $d_{1}(\alpha)=p-r$, and we have $T_{1}(\alpha)=\alpha^{\sigma}
- (p-r) \in p{\mathbb Z}_p$.
By substituting $X + (p-r)$ for $X$ in $X^{2} + b X + c$, we have the
minimal polynomial of $T_{1}(\alpha)$ given by
\begin{equation}\label{eq:MinimalPolynomialT1}
X^{2} + \left\{ b + 2 (p-r) \right\} X + (p-r) b + c + (p-r)^2 \in {\mathbb Z}[X]. 
\end{equation}
Since ${\mathbb Z} \setminus p{\mathbb Z} \ni b + 2 (p-r) \equiv -r
\pmod{p}$ and $(p-r) b + c + (p-r)^2 \in p{\mathbb Z}$, we see that
$T_{1}(\alpha)$ is a quadratic Hensel root.

By a similar argument, we can show that
$T_{2}(\alpha)=-\alpha^{\sigma} - r \in p{\mathbb Z}_p$ and its
minimal polynomial is given by
\begin{equation}\label{eq:MinimalPolynomialT2}
X^{2} + \left( -b + 2r \right) X  -rb + c + r^2 \in {\mathbb Z}[X]. 
\end{equation}  
Since $-b + 2r \in {\mathbb Z} \setminus p{\mathbb Z}$ and $-rb + c +
r^2 \in p{\mathbb Z}$, we see that $T_{2}(\alpha)$ is also a quadratic
Hensel root.
\end{proof}

\begin{rem}
Both maps $T_{1}$ and $T_{2}$ preserve discriminants of
the minimal polynomials of quadratic Hensel roots,
i.e., the discriminants of \eqref{eq:MinimalPolynomialT1} and
\eqref{eq:MinimalPolynomialT2}
are equal to $b^2 -4c$.
\end{rem}

\section{Continued fraction algorithms}\label{sec:ContinuedFractionAlgorithms}

On the basis of $T_{1}$ and $T_{2}$ introduced in the previous
section, we can consider a variety of continued fraction algorithms
which yield an eventually periodic expansion for every quadratic
element of ${\mathbb Q}_p$ and yield a finite expansion for every
rational number.
In the present paper, we deal with three particular algorithms.
As in Section~\ref{sec:TwoBasicMaps}, the minimal polynomial of $x \in
A_p$ is denoted by $aX^2 + bX + c \in
{\mathbb Z}[X]$ for quadratic $x$, and by $bX + c \in {\mathbb Z}[X]$
for rational $x$.
Our algorithms decide which map, $T_{1}$ or $T_{2}$, is applied to a
given $x \in A_p \setminus \left\{ 0 \right\}$ on the basis of two
coefficients of its minimal polynomial, namely the coefficient $b$ of
$X$ and the constant term $c$, regardless of the degree of $x$.
In the following, we specify our algorithms by specifying the map $T:
A_p \setminus \left\{ 0 \right\} \rightarrow A_p$ used by each algorithm:

\begin{align*}
\intertext{\bf Algorithm A:}
T(x) &:=T_{2}(x).\\
\intertext{\bf Algorithm B:}
T(x) &:=
\begin{cases}
T_{2}(x)& \text{if $b \ge 0$},\\
T_{1}(x)& \text{if $b < 0$}.
\end{cases}\\
\intertext{\bf Algorithm C:}
T(x) &:=
\begin{cases}
T_{2}(x)& \text{if $b \ge 0$ and $c > 0$},\\
T_{1}(x)& \text{otherwise}.
\end{cases}
\end{align*}

\section{Expansions of quadratic Hensel roots}\label{sec:ExpansionsQuadraticHenselRoots}

In this section, we deal with the expansions of quadratic Hensel
roots, on the basis of which we expand general quadratic elements of
${\mathbb Q}_p$.
We show that each of our algorithms gives an eventually
periodic expansion for any quadratic Hensel root.
We will deal with the expansions of general quadratic elements of ${\mathbb
Q}_p$ and those of rational
numbers in the subsequent sections.

When considering an expansion of a quadratic Hensel root $\alpha$, it
is convenient to identify $\alpha$ with the pair $(b, c)$ of
coefficients of its minimal polynomial $X^{2} + b X + c$.
In the following, we will do so and allow writing $\alpha=(b,
c)$.
Similarly, we write $T(\alpha)$ also as $T(b, c)$.
We note that in view of \eqref{eq:MinimalPolynomialT1} and
\eqref{eq:MinimalPolynomialT2}, we have
\begin{align}
T_{1}(b, c) &=\left(b + 2 (p-r), (p-r) b + c + (p-r)^2\right),\label{eq:QuadraticHenselT1}\\
\intertext{and}
T_{2}(b, c) &=\left(-b + 2r, -rb + c + r^2\right),\label{eq:QuadraticHenselT2}
\end{align}
where $r$ is an element of $\left\{1, \dots, p-1 \right\}$ satisfying
$r \equiv b \pmod{p}$.

Let $S$ be the set of all quadratic Hensel roots, i.e.,
\begin{equation*}
S=\left\{ (b, c) \in {\mathbb Z}^2 ~\vert~ b \in {\mathbb Z} \setminus p{\mathbb Z}, ~c \in p{\mathbb Z}, \text{ and } X^{2} + b X + c \text{ is irreducible}\right\}.
\end{equation*}  
We put
\begin{align*}
  S_1 &:= \left\{ (b, c) \in S ~\vert~ b>0, ~c>0 \right\},\\
  S_2 &:= \left\{ (b, c) \in S ~\vert~ b<0, ~c>0 \right\},\\
  S_3 &:= \left\{ (b, c) \in S ~\vert~ b<0, ~c<0 \right\},\\
  S_4 &:= \left\{ (b, c) \in S ~\vert~ b>0, ~c<0 \right\}.
\end{align*}  
We further put
\begin{align*}
  R   &:= \left\{ (b, c) \in S  ~\vert~ 1 \le b \le p-1 \right\},\\
  R_1 &:= \left\{ (b, c) \in S_1 ~\vert~ 1 \le b \le p-1 \right\},\\
  R_4 &:= \left\{ (b, c) \in S_4 ~\vert~ 1 \le b \le p-1 \right\}.
\end{align*}

In the following subsections, we give
Theorems~\ref{QuadraticHenselAlgorithmA},
\ref{QuadraticHenselAlgorithmB}, and \ref{QuadraticHenselAlgorithmC}
which state the periodicity of the continued fraction expansion
obtained by Algorithms A, B, and C, for any given quadratic Hensel
root.

\subsection{Expansions of quadratic Hensel roots by Algorithm~A}
\label{sec:QuadraticHenselRootsAlgorithmA}

\begin{thm}\label{QuadraticHenselAlgorithmA}
The expansion of every quadratic Hensel root obtained by Algorithm A
(i.e., $T_{2}$) is purely periodic with period one or two.
\end{thm}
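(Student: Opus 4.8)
The plan is to prove the stronger statement that $T_2 \circ T_2$ is the identity on the set $S$ of quadratic Hensel roots, after which pure periodicity with period at most two is immediate. I will work with the coordinate description $\alpha = (b,c)$ and the explicit formula \eqref{eq:QuadraticHenselT2}, namely $T_2(b,c) = (-b+2r,\, -rb+c+r^2)$ with $r \in \{1,\dots,p-1\}$ and $r \equiv b \pmod p$. The single observation that makes everything collapse is that $r$ is preserved by one application of $T_2$: if $(b',c') = T_2(b,c)$ then $b' = -b+2r \equiv -b+2b = b \pmod p$, so the integer in $\{1,\dots,p-1\}$ congruent to $b'$ modulo $p$ is again $r$. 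Granting this, formula \eqref{eq:QuadraticHenselT2} applied a second time gives
\begin{equation*}
T_2(b',c') = \left(-b'+2r,\ -rb'+c'+r^2\right) = \left(b,\ -r(-b+2r) + (-rb+c+r^2) + r^2\right) = (b,c),
\end{equation*}
so that $T_2^2 = \mathrm{id}_S$.

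It then remains to translate this into a statement about the expansion produced by Algorithm~A. A quadratic Hensel root $\alpha$ is a root of an irreducible quadratic, hence nonzero, and it lies in $p\mathbb{Z}_p$, so $[\alpha]=0$ and $\langle\alpha\rangle=\alpha$. By \lemref{T1AndT2}, $T_2$ maps $S$ into $S$, so the orbit $\alpha, T_2(\alpha), T_2^2(\alpha), \dots$ never leaves $S$ and in particular never hits $0$; thus the expansion is infinite, falling under the third case discussed in Section~\ref{sec:PadicContinuedFractionExpansion}. Since each triple $(t_n,k_n,d_n)$ of the $n$-th partial quotient is a function of $T_2^{n-1}(\alpha)$ alone, and $T_2^{n+1}(\alpha) = T_2^{n-1}(\alpha)$ by the identity just proved, the sequence of partial quotients is periodic from the outset with period dividing two; this is exactly pure periodicity with period one or two. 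One can moreover read off from \eqref{eq:QuadraticHenselT2} that $T_2(b,c)=(b,c)$ holds iff $r=b$, i.e.\ iff $1 \le b \le p-1$ (equivalently $\alpha \in R$), so the period is one precisely on $R$ and is two otherwise.

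I do not expect a genuine obstacle here: once one notices that $r$ is a $T_2$-invariant, the identity $T_2^2 = \mathrm{id}$ falls out of a two-line computation. The only point deserving a word of care is ensuring that the conclusion is ``purely periodic'' rather than ``eventually terminating'', which is precisely what \lemref{T1AndT2} together with the nonvanishing of quadratic Hensel roots guarantees.
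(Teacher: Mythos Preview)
Your proof is correct and follows essentially the same approach as the paper's: both hinge on the observation that $-b+2r \equiv r \pmod p$, so the residue $r$ is unchanged after one application of $T_2$, whence the explicit formula \eqref{eq:QuadraticHenselT2} yields $T_2^2(b,c)=(b,c)$. Your additional remarks---that the orbit never hits $0$ so the expansion is genuinely infinite, and that the period is one exactly on $R$---are correct elaborations, the latter matching the paper's Remark~\ref{FixedPointT2}.
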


\begin{proof}
Let $(b, c) \in S$.
Let $r$ be an element of $\left\{1, \dots, p-1 \right\}$ satisfying $r
\equiv b \pmod{p}$.
Then,
$T_{2}(b, c) =\left(-b + 2r, -rb + c + r^2\right)$.
Using $-b + 2r \equiv r \pmod{p}$, we easily see that
$T_{2}^{2}(b, c) =\left(b, c\right)$.
Thus, $(b, c)$ is a purely periodic point with period two or one.
\end{proof}

\begin{rem}\label{FixedPointT2}
It is easy to see that $(b, c) \in S$ is a fixed
point of $T_{2}$ if and only if $(b, c) \in R$.
\end{rem}

\subsection{Expansions of quadratic Hensel roots by Algorithm~B}

\begin{thm}\label{QuadraticHenselAlgorithmB}
The expansion of every quadratic Hensel root obtained by Algorithm B
is eventually periodic with period one.
\end{thm}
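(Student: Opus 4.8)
The plan is to track the sign of the coefficient $b$ under the dynamics of Algorithm~B and show that the orbit is eventually trapped in the region $R = \{(b,c) \in S : 1 \le b \le p-1\}$, on which Algorithm~B acts by $T_2$ and fixes every point (as in Remark~\ref{FixedPointT2}). First I would partition $S$ into $S_{+} := \{(b,c) \in S : b > 0\}$ and $S_{-} := \{(b,c) \in S : b < 0\}$ and analyze the two branches separately. On $S_{-}$ the algorithm applies $T_1$, which by \eqref{eq:QuadraticHenselT1} sends $b \mapsto b + 2(p-r)$ where $r \in \{1,\dots,p-1\}$ is the residue of $b$ modulo $p$; since $b < 0$ and $p - r \ge 1$, this strictly increases $b$ by at least $2$, so after finitely many $T_1$-steps the first coordinate becomes positive and the orbit enters $S_{+}$. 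On $S_{+}$ the algorithm applies $T_2$, which by \eqref{eq:QuadraticHenselT2} sends $b \mapsto -b + 2r$; the key observation is that $-b + 2r \equiv -b \pmod p$, so whether the image lands back in $S_{+}$ or in $S_{-}$ depends on the size of $b$ relative to $2r \le 2(p-1)$.

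The main case analysis I would carry out on $S_{+}$: write $b = qp + r$ with $q \ge 0$ and $r \in \{1,\dots,p-1\}$. If $q = 0$, i.e. $1 \le b \le p-1$, then $(b,c) \in R$ and $T_2$ fixes it, so we are done (period one, as claimed). If $q \ge 1$, then $-b + 2r = -qp + r < 0$ (since $qp \ge p > r$), so $T_2$ moves the point into $S_{-}$; but then one $T_1$-step is applied, and the composite effect on the first coordinate is $b \mapsto -b + 2r \mapsto (-b + 2r) + 2(p - r') $ where $r'$ is the residue of $-b+2r$ mod $p$. Since $-b + 2r \equiv -b \equiv p - r \pmod p$ (as $0 < r < p$), we get $r' = p - r$, hence $-b + 2r + 2(p-r') = -b + 2r + 2r = -b + 4r$. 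Wait — I should be careful here and recompute; more robustly, I would argue via a monotone quantity: the pair $T_1 \circ T_2$ sends $b = qp+r$ to $-b+4r$ when $q\ge 1$, and iterating $T_2$ (when positive) or $T_1\circ T_2$ (otherwise) the quantity $|b|$ (equivalently the quotient $q = \lfloor b/p \rfloor$ on the positive side, or $\lceil |b|/p\rceil$ on the negative side) is non-increasing and strictly decreases whenever $q \ge 1$. The cleanest formulation is: define $\Phi(b,c) := \lceil |b|/p \rceil$ if $b<0$ and $\lceil b/p \rceil - 1$ (i.e. $q$) if $b>0$; show $\Phi$ strictly decreases along the algorithm's orbit until it reaches $0$, at which point the orbit sits in $R$ and is fixed by $T_2$.

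The subtle point — and the step I expect to be the main obstacle — is controlling the second coordinate $c$ and ensuring the orbit stays inside $S$ (i.e. that irreducibility is preserved and that $c$ remains in $p\mathbb{Z}$). Irreducibility and $c \in p\mathbb{Z}$ are handled by \lemref{T1AndT2}, which guarantees $T_1, T_2$ map quadratic Hensel roots to quadratic Hensel roots, and the Remark following it notes the discriminant $b^2 - 4c$ is invariant; this invariance is exactly what I would use to keep $c$ bounded once $b$ is bounded, confirming the orbit eventually lands in a finite set and hence is eventually periodic. The remaining work is then purely the bookkeeping: verify that on the trapping region $R$ the map $T_2$ is the identity (Remark~\ref{FixedPointT2}), so the eventual period is exactly one, and check the finitely many boundary sub-cases ($q=1$, small $r$) where the naive estimate on $\Phi$ is tight, to make sure no cycle of length $\ge 2$ can form before the orbit reaches $R$.
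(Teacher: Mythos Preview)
Your strategy is the paper's: track $b$ and show the orbit lands in $R$, where Remark~\ref{FixedPointT2} gives period one. Your execution has a slip, though: after $T_2$ sends $b = qp+r$ to $b' = -qp + r$, the residue of $b'$ modulo $p$ is $r$, not $p-r$ (your step ``$-b+2r \equiv -b$'' drops $2r$, which is not divisible by $p$ for $p$ odd). With the correct residue, one $T_1$-step gives $b'' = -(q-1)p + (p-r)$, and your $\Phi$ is then only non-increasing, dropping by one after each $T_1$-step but not after the $T_2$-step; so the claim that $\Phi$ strictly decreases at every step is false as stated, though easily repaired.

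The paper bypasses any potential function with a direct three-case split. If $1 \le b \le p-1$, the point is in $R$ and fixed. If $b < 0$, write $b = -np + r$ with $n \ge 1$; one $T_1$-step gives $b' = -(n-1)p + (p-r)$, so exactly $n$ iterations of $T_1$ land the orbit in $R$. If $b > p$, write $b = np + r$ with $n \ge 1$; one $T_2$-step gives $b' = -np + r < 0$, reducing to the previous case. The integer $n$ tells you precisely how many steps remain, so there is no need for boundary sub-cases or worry about stray cycles of length $\ge 2$. Your concern about the second coordinate is also unnecessary: as you yourself note, \lemref{T1AndT2} keeps the orbit in $S$, and $c$ plays no further role in the termination argument.
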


\begin{proof}
Let $(b, c) \in S$.
We will show that $(b, c)$ is an eventually fixed point of the map $T$
associated with Algorithm B by considering the following three cases:
\begin{enumerate}[(i)]
\item $b \in \left\{1, \dots, p-1 \right\}$.
According to the definition of Algorithm B,
we apply $T_{2}$ to $(b, c) \in R$.
As described in Remark~\ref{FixedPointT2}, such $(b, c)$ is a
fixed point.

\item $b<0$.
We apply $T_{1}$ to $(b, c)$ with $b<0$.
We can write $b=-np+r$, where $n \in {\mathbb Z}_{> 0}$ and $r \in
\left\{1, \dots, p-1 \right\}$.
Let $(b', c')=T_{1}(b, c)$.
Then, $b'=-(n-1)p+p-r$.
Thus, the $n$-fold iteration of $T_{1}$ maps $(b, c)$ to a
fixed point given in (i).

\item $b>p$.
We apply $T_{2}$ to $(b, c)$ with $b>p$.
We can write $b=np+r$, where $n \in {\mathbb Z}_{> 0}$ and $r \in
\left\{1, \dots, p-1 \right\}$.
Let $(b', c')=T_{2}(b, c)$.
Since $b'=-np+r<0$, this case reduces to (ii).
\end{enumerate}
\end{proof}

By the proof of \thmref{QuadraticHenselAlgorithmB}, we get the
following corollary.

\begin{cor}\label{PurelyPeriodicPointsAlgorithmB}
The set of purely periodic points of $T$ associated with Algorithm B
in $S$ is $R$.
\end{cor}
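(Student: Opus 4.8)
The plan is to establish the two inclusions separately, leaning on the case analysis already performed in the proof of \thmref{QuadraticHenselAlgorithmB} and on the fact (from \lemref{T1AndT2}) that $T$ maps $S$ into itself, so that the orbit of any point of $S$ stays in $S$ and the statement is well posed.

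For the inclusion showing that every element of $R$ is purely periodic: if $(b,c) \in R$ then $b \in \left\{1, \dots, p-1\right\}$, so in Algorithm B the branch $b \ge 0$ applies and $T(b,c) = T_{2}(b,c)$; by Remark~\ref{FixedPointT2}, $(b,c)$ is a fixed point of $T_{2}$, hence of $T$, hence purely periodic with period one.

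For the reverse inclusion I would argue contrapositively. Suppose $(b,c) \in S \setminus R$; since $b \notin p{\mathbb Z}$, this forces $b < 0$ or $b > p$. By the proof of \thmref{QuadraticHenselAlgorithmB} there is an integer $N \ge 1$ with $T^{N}(b,c) \in R$ and $T^{N}(b,c)$ a fixed point of $T$: concretely, if $b < 0$ one writes $b = -np + r$ and takes $N = n$, while if $b > p$ a single application of $T_{2}$ lands in the previous situation, so one adds one to the count. Then $T^{m}(b,c) = T^{N}(b,c)$ for every $m \ge N$. If $(b,c)$ were purely periodic, say $T^{k}(b,c) = (b,c)$ with $k \ge 1$, then choosing $m$ with $mk \ge N$ would give $(b,c) = T^{mk}(b,c) = T^{N}(b,c) \in R$, contradicting $(b,c) \notin R$. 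Hence the purely periodic points of $S$ all lie in $R$, and together with the first inclusion this proves the corollary.

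I do not anticipate a genuine obstacle: the periodic structure of Algorithm B on $S$ is entirely pinned down by \thmref{QuadraticHenselAlgorithmB}, and the only extra ingredient is the elementary observation that an orbit which is eventually constant can return to its starting point only if it was constant from the outset. The one spot deserving care is the bookkeeping in the case $b > p$, where the first step $T_{2}$ produces $b' = -np + r < 0$ with the same $n$ and only the subsequent iterations of $T_{1}$ push the first coordinate into $\left\{1, \dots, p-1\right\}$, so the value of $N$ must include that initial step.
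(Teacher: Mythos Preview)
Your proposal is correct and follows exactly the route the paper intends: the paper simply states that the corollary follows from the proof of \thmref{QuadraticHenselAlgorithmB}, and your argument spells out precisely that deduction, using the case analysis (i)--(iii) there together with Remark~\ref{FixedPointT2}. The contrapositive step you add (an eventually constant orbit cannot be purely periodic unless it starts at the fixed point) is the natural way to make the implicit reasoning explicit, and your bookkeeping for the $b>p$ case is accurate.
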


\subsection{Expansions of quadratic Hensel roots by Algorithm~C}

\begin{thm}\label{QuadraticHenselAlgorithmC}
The expansion of every quadratic Hensel root obtained by Algorithm C
is eventually periodic.
\end{thm}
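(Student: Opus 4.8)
The plan is to mimic the structure of the proof of \thmref{QuadraticHenselAlgorithmB}, reducing the general case to the behaviour of $T$ on the finite-looking set $R$ by first showing that after finitely many steps the parameter $b$ lands in $\{1,\dots,p-1\}$ and stays bounded. First I would observe, exactly as in cases (ii) and (iii) of the previous proof, that one application of $T_1$ to a pair with $b<0$ (writing $b=-np+r$) produces $b'=-(n-1)p+(p-r)$, which increases $b$ strictly toward the interval $\{1,\dots,p-1\}$; and one application of $T_2$ to a pair with $b>p$ (writing $b=np+r$) produces $b'=-np+r<0$, reducing to the previous situation. The only subtlety compared with Algorithm B is that Algorithm C's choice between $T_1$ and $T_2$ also depends on the sign of $c$, so I must check the sign of $c$ is not an obstacle to this descent: when $b<0$ we always apply $T_1$ regardless of $c$ (``otherwise'' branch), and when $b\ge p$ we have $b\ge 0$, so if also $c>0$ we apply $T_2$ and land in $b'<0$ as desired, while if $c\le 0$ (so $c<0$ since $c\ne 0$) we apply $T_1$, and I would check that $T_1$ with $b>p$ does not destroy progress — here $b=np+r\mapsto b'=np+r+2(p-r)=np+2p-r$, which unfortunately \emph{increases} $b$, so the naive descent fails in this branch and needs a separate argument.

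So the key steps, in order, are: (1) handle $b\in\{1,\dots,p-1\}$ — if $c>0$ we apply $T_2$ and by Remark~\ref{FixedPointT2} the point is fixed, so assume $c<0$, i.e.\ the point lies in $R_4$, and study the orbit inside $R_4$ separately; (2) handle $b<0$ by the $T_1$-descent above, which reaches an $R$-point in $n$ steps; (3) handle $b\ge p$ with $c>0$ by one $T_2$-step into case (2); (4) handle $b\ge p$ with $c<0$, which is the genuinely new and hard case. For step (4) and the leftover part of step (1) (the set $R_4$), the idea is to track the constant term. Using \eqref{eq:QuadraticHenselT1}, when we repeatedly apply $T_1$ to points with $c<0$, the constant term evolves by $c\mapsto c+(p-r)b+(p-r)^2$; since the discriminant $b^2-4c$ is preserved (by the Remark after \lemref{T1AndT2}) and is positive-bounded away from nothing... rather, since $b^2-4c$ is a fixed positive integer $D$ along the whole orbit, once $b$ is bounded in $\{1,\dots,p-1\}$ we get $|c|=(b^2-D)/4$ is determined up to the finitely many values of $b$, hence the orbit is confined to a finite set and must be eventually periodic. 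The crux is therefore to show the orbit \emph{does} get trapped with bounded $b$.

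The main obstacle is precisely the branch $b\ge p$, $c<0$ (and its relatives reached from $R_4$): here $T_1$ pushes $b$ upward, so I cannot argue monotone descent in $b$ alone. The plan to resolve this is to use the conjugate/valuation picture from the proof of \lemref{T1AndT2} together with the invariance of the discriminant $D=b^2-4c$. Along any orbit, $D$ is constant, so $c=(b^2-D)/4$; if $b\to+\infty$ then $c\to+\infty$ too, contradicting $c<0$, so $b$ cannot increase without bound while $c$ stays negative — meaning after finitely many $T_1$-steps either $c$ becomes positive (dropping us into an already-handled case via a subsequent $T_2$ or into $R$) or $b$ re-enters a bounded range; in either case the orbit enters the finite set $\{(b,c):b^2-4c=D,\ |b|\le p-1 \text{ or } c>0,\ b \text{ bounded}\}$, which is finite because $D$ is fixed. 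Being eventually trapped in a finite set, the $T$-orbit is eventually periodic, which is the claim. I would present this as: (a) reduce via \lemref{T1AndT2} and invariance of $D$ to showing the orbit is eventually confined to a finite set; (b) case analysis on the sign of $b$ and $c$ showing confinement, with the $b\ge p,\ c<0$ case handled by the $b$-versus-$c$ growth incompatibility forced by constancy of $D$; (c) conclude eventual periodicity from finiteness.
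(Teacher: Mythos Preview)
Your core idea --- exploiting the invariance of the discriminant $D=b^2-4c$ to force the orbit onto a fixed parabola and thence into a finite set --- is correct and is \emph{not} what the paper does, so it is worth developing. However, the argument as written has a genuine gap in step~(b). Your case reduction is circular: from $b\ge p,\ c<0$ you iterate $T_1$ until $c>0$, landing in the ``already-handled'' case $b\ge p,\ c>0$; but that case was handled by one $T_2$-step into $b<0$, then $T_1$-steps back into $R$, where you may land in $R_4$ and be right back in the situation you were trying to resolve. Saying the orbit ``enters the finite set $\{(b,c):b^2-4c=D,\ |b|\le p-1 \text{ or } c>0,\ b\text{ bounded}\}$'' does not help: that set is not well-defined (what is the bound?), and even if it were you need the orbit to be \emph{trapped} in a finite set, not merely to visit one. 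Also, $D$ need not be positive (e.g.\ $p=3$, $b=1$, $c=3$), though this does not affect the argument once stated correctly.

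The fix is easy and uses exactly your discriminant observation. At the first step where $c$ turns positive under the $T_1$-iteration, the \emph{previous} step had $c<0$, i.e.\ $b^2<D$, so the new $b$ satisfies $b<\sqrt{D}+2(p-1)$. The subsequent $T_2$-step gives $|b'|=b-2r<b$, and the ensuing $T_1$-steps from $b'<0$ strictly increase $b$ until it lands in $\{1,\dots,p-1\}$ without overshooting (as in your case~(ii)). Hence every point of the orbit after first reaching $R$ has $|b|<\sqrt{D}+2(p-1)$, and since $c=(b^2-D)/4$, the orbit is confined to finitely many lattice points on the parabola. Now finiteness gives eventual periodicity.

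For comparison, the paper takes a more explicit route: it first shows every orbit reaches $R_1\cup R_4$, then for $(b_0,c_0)\in R_4$ it computes closed forms $T_1^{2k}(b_0,c_0)=(2pk+r,\,p^2k^2+rpk+c_0)$ and $T_1^{2k-1}(b_0,c_0)=(2pk-r,\,p^2k^2-rpk+c_0)$, observes that $T_1$ is bijective on $S$, and checks directly that the single $T_2$-step at the top of $S_1$ coincides with $T_1^{-2n}$ (or $T_1^{-(2n-1)}$), so the orbit closes up with period $4n\pm 1$. This yields more --- explicit periods and the fact that every point of $R_4$ is \emph{purely} periodic, which feeds into the later characterization of the reduced set (\lemref{PurelyPeriodicPointsAlgorithmC}). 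Your discriminant argument is cleaner for mere eventual periodicity but does not immediately give those refinements.
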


\begin{proof}
We will show that every orbit of $T$ associated with Algorithm C
starting from a quadratic Hensel root is eventually periodic.

First, we need to discuss the dynamics of $T$ on $S$.
We apply $T_{2}$ to $(b, c) \in S_1$ and $T_{1}$ to $(b, c) \in
\bigcup_{i=2}^4 S_i$.
We see by \eqref{eq:QuadraticHenselT1} that there exists no fixed
point of $T_{1}$ in $\bigcup_{i=2}^4 S_i$ since $b \neq b + 2 (p-r)$.
Thus, the fixed points of $T$ on $S$ are those of $T_{2}$ in $S_1$,
i.e., the points $(b, c) \in R_1$ (cf. Remark~\ref{FixedPointT2}).
We see by \eqref{eq:QuadraticHenselT1} that in $S_4$, the values of
$b$ and $c$ strictly increase with each iteration of $T_{1}$, and
thus every $(b, c) \in S_4$ is eventually mapped into $S_1$.
Every $(b, c) \in S_1$ other than the fixed points is mapped into
either $S_2$ or $S_3$ under $T_{2}$ (cf. Proof (iii) of
\thmref{QuadraticHenselAlgorithmB}).
In $S_2$ and $S_3$, the value of $b$ strictly increases with each
iteration of $T_{1}$, and every $(b, c) \in S_2 \cup S_3$ is
eventually mapped into $R = R_1 \cup R_4$ (cf. Proof (ii) of
\thmref{QuadraticHenselAlgorithmB}).

Second, we should note that $T_{1}$ on $S$ is bijective.
The inverse map $T_{1}^{-1}: S \rightarrow S$ is given by
\begin{equation}\label{eq:QuadraticHenselT1Inverse}
T_{1}^{-1}(b, c) =\left(b - 2 r, -r b + c + r^2\right),
\end{equation}
where $r$ is an element of $\left\{1, \dots, p-1 \right\}$ satisfying
$r \equiv b \pmod{p}$.

Due to the dynamics of $T$ on $S$, any orbit starting from a quadratic
Hensel root eventually enters either $R_1$ or $R_4$.
If the orbit enters $R_1$, then the orbit is eventually periodic with
period one since every element of $R_1$ is a fixed point.

In what follows, we will show that the orbit entering $R_4$ is also
eventually periodic by showing that every element of $R_4$ is a
purely periodic point.
Let $(b_0, c_0) \in R_4$.
Repeated iteration of $T$ (i.e., $T_{1}$) eventually maps $(b_0, c_0)$
into $S_1$.
Two cases occur:
\begin{enumerate}[(i)]
\item $(b_0, c_0)$ is mapped into $S_1$ by iterating $T$ even times,
i.e., there exists $n \in {\mathbb Z}_{> 0}$ such that $T^{2n}(b_0,
c_0) \in S_1$ and $T^{i}(b_0, c_0) \in S_4$ for $0 \le i \le 2n-1$.

\item $(b_0, c_0)$ is mapped into $S_1$ by iterating $T$ odd times,
i.e., there exists $n \in {\mathbb Z}_{> 0}$ such that $T^{2n-1}(b_0,
c_0) \in S_1$ and $T^{i}(b_0, c_0) \in S_4$ for $0 \le i \le 2n-2$.
\end{enumerate}

We note the following fact: Let
$r:=b_0 \in \left\{1, \dots, p-1 \right\}$.
By induction, we can show
\begin{align*}
T_{1}^{2k}(b_0, c_0) &=(2 p k + r,~ p^2 k^2 + r p k + c_0) &(k \in {\mathbb Z}),\\
T_{1}^{2k-1}(b_0, c_0) &=(2 p k - r,~ p^2 k^2 - r p k + c_0) &(k \in {\mathbb Z}).
\end{align*}

Let us consider Case (i). We see that
\begin{align*}
  T^{2n}(b_0, c_0) &= T_{1}^{2n}(b_0, c_0)\\
  &=(2 p n + r,~ p^2 n^2 + r p n + c_0) \in S_1.
\end{align*}
Hence, we have
\begin{align*}
  T^{2n+1}(b_0, c_0) &= T_{2}(2 p n + r,~ p^2 n^2 + r p n + c_0)\\
  &=(-2 p n + r,~ p^2 n^2 - r p n + c_0).
\end{align*}
On the other hand, we see that
\begin{align*}
  T_{1}^{-2n}(b_0, c_0)  &=(-2 p n + r,~ p^2 n^2 - r p n + c_0)\\
  &= T^{2n+1}(b_0, c_0).
\end{align*}
Since $-2 p n + r < 0$,
we have
\begin{align*}
  (b_0, c_0)  &=T_{1}^{2n} \circ T^{2n+1}(b_0, c_0)\\
  &= T^{4n+1}(b_0, c_0).
\end{align*}
Therefore, in Case (i), $(b_0, c_0)$ is a purely periodic point.

In a similar manner, we can prove that also in Case (ii), $(b_0, c_0)$
is a purely periodic point.
\end{proof}

%\begin{rem}
%(SYUSEI HITSUYOU)
%Many $p$-adic continued fraction algorithms have been proposed so far,
%but, except for ours, there exists no algorithm that is guaranteed to
%produce an eventually periodic expansion for every quadratic element
%of ${\mathbb Q}_p$.
%In particular, $T_{1}$ was reported to produce a nonperiodic? expansion
%for a quadratic element [REFERENCE] although it might seem natural to
%consider $T_{1}$ when expanding quadratic elements.?
%\end{rem}

The following lemma characterizes the set of purely periodic points
of $T$ associated with Algorithm C in $S$.

\begin{lem}\label{PurelyPeriodicPointsAlgorithmC}
The set of purely periodic points of $T$ associated with Algorithm C
in $S$ is $P_1 \cup R_1 \cup S_3 \cup S_4$, where $P_1$ is defined by
\begin{equation*}
P_1 := \left\{ (b, c) \in S_1 \setminus R_1 ~\vert~ c < \left[ b
\right] \left\langle b \right\rangle \right\}.
\end{equation*}  
\end{lem}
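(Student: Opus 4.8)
The plan is to use the detailed dynamical picture of $T$ on $S$ built up in the proof of \thmref{QuadraticHenselAlgorithmC} and determine exactly which points return to themselves. Recall that picture: from the earlier analysis, every orbit is eventually trapped in $R_1 \cup R_4$; every point of $R_1$ is a fixed point; every point of $R_4$ is purely periodic; and points of $S_4 \setminus R_4$, $S_2$, $S_3$, $S_1 \setminus R_1$ move in definite ways. To pin down the purely periodic points I would classify each $S_i$ separately and ask whether the orbit of $(b,c)$ ever returns to $(b,c)$.

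First I would dispose of the easy regions. The fixed points $R_1$ are trivially purely periodic. For $R_4$ we already know (from the proof of \thmref{QuadraticHenselAlgorithmC}, Cases (i)--(ii)) that every element is purely periodic; so $R_4 \subset$ the periodic set. Next, for $(b,c) \in S_3$ I would apply $T_1$ and use \eqref{eq:QuadraticHenselT1}: writing $b = -np + r$ with $n>0$ and $r \in \{1,\dots,p-1\}$, the $n$-fold iterate lands in $R$, and I would check (tracking the explicit formulas, much as in Remark~\ref{FixedPointT2} and the formulas $T_1^{2k}, T_1^{2k-1}$ in the previous proof) that in fact the orbit goes $S_3 \to \cdots \to R_4 \to S_1 \to$ (eventually) back to $R_4$ and ultimately returns to the original point — i.e.\ every point of $S_3$ is purely periodic because $T_1$ is bijective on $S$ (its inverse is \eqref{eq:QuadraticHenselT1Inverse}) and the forward orbit from $R_4$ cycles back through $S_3$. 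Actually the cleaner argument: since $T_1$ is a bijection of $S$ and the forward orbit of any point is eventually periodic, a point is purely periodic iff it lies on a cycle; so I would show $S_3 \cup S_4$ lies entirely on the union of cycles through $R_4$. Symmetrically, $S_4$ (hence $R_4$) is purely periodic for the same reason.

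The substantive case is $S_1$, and this is where $P_1$ enters. For $(b,c) \in S_1 \setminus R_1$ we apply $T_2$, obtaining (by the reasoning in Proof~(iii) of \thmref{QuadraticHenselAlgorithmB}, with $b = np+r$) the point $(-np+r, \, \text{new } c) \in S_2 \cup S_3$. I would trace this orbit: it goes into $S_2$ or $S_3$, then $T_1$ drives it (increasing $b$) into $R = R_1 \cup R_4$, and from there it is eventually periodic. The point $(b,c)$ is purely periodic exactly when this whole trip returns to $(b,c)$. Using the explicit formulas for iterates of $T_1$ and of $T_2$, the return condition reduces to an inequality on $c$ relative to $b$: I expect the orbit lands in $R_4$ versus $R_1$ (and hence whether it can cycle back through $S_1$ to $(b,c)$) according to the sign of $c - [b]\langle b\rangle$, which after unwinding $[b] = p\lfloor b/p\rfloor$-type bookkeeping is precisely the cutoff defining $P_1$. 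Concretely: writing $b = pm + r$ with $r = [b] \in \{1,\dots,p-1\}$... wait, here $[b]$ and $\langle b\rangle$ refer to the $p$-adic integral/fractional parts of the integer $b$, so $\langle b \rangle$ collects the high-order part; I would carefully substitute these into the orbit formulas and verify the equivalence.

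The main obstacle I anticipate is exactly this last computation in $S_1$: one must follow the orbit through several regime changes ($S_1 \xrightarrow{T_2} S_2\cup S_3 \xrightarrow{T_1^{\,k}} R \xrightarrow{?} \cdots$), keep track of how $b$ and $c$ evolve under the composite map using the quadratic formulas \eqref{eq:QuadraticHenselT1}--\eqref{eq:QuadraticHenselT2}, and show that the condition ``the orbit eventually returns to the start'' is equivalent to $c < [b]\langle b\rangle$ — neither more nor less. I would organize this by first identifying, for each $(b,c)\in S_1\setminus R_1$, the unique point of $R_1\cup R_4$ that its orbit first hits, then using bijectivity of $T_1$ to determine whether $(b,c)$ lies on the cycle through that point (automatically yes if it's in $R_4$, and the distinction collapses to the stated inequality). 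Everything else (the regions $R_1$, $S_3$, $S_4$) follows quickly from the already-established dynamics; the characterization of $P_1$ is the crux.
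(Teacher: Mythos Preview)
Your overall strategy---classify by the regions $S_i$ and use the dynamics established in the proof of \thmref{QuadraticHenselAlgorithmC}---matches the paper's. But there are two genuine gaps, and in the key case the paper's argument is far simpler than the one you propose.

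First, you never treat $S_2$. You must show that $S_2$ contains no purely periodic point. The paper does this \emph{after} characterizing $P_1$: since every $(b,c)\in P_1$ satisfies $T_2(b,c)\in S_3$ (because $-rb+c+r^2 = c-[b]\langle b\rangle<0$), and since a periodic orbit can enter $S_2\cup S_3$ only via $T_2$ from $S_1$ (as $T_1$ strictly increases $b$ there), no periodic orbit ever visits $S_2$.

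Second, for $S_1\setminus R_1$ your forward-tracking plan is harder than necessary, and the claim ``automatically yes if the orbit first hits $R_4$'' is unjustified: hitting $R_4$ makes the \emph{tail} periodic, but you still owe the verification that the cycle returns to your starting point $(b,c)$ rather than to some other point of $S_1$. The paper avoids all of this with a one-step backward criterion: $(b,c)\in S_1\setminus R_1$ is purely periodic if and only if $T_1^{-1}(b,c)\in S_4$. Indeed, any $T$-preimage of a point in $S_1$ must arise via $T_1$ (since $T_2$ sends $S_1$ to points with negative first coordinate); by \eqref{eq:QuadraticHenselT1Inverse} the first coordinate of $T_1^{-1}(b,c)$ is $b-2r>0$, so $T_1^{-1}(b,c)\in S_1\cup S_4$, and it is a genuine $T$-preimage only when it lies in $S_4$. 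Since all of $S_4$ is already known to be periodic, this gives both directions at once, and the condition $-rb+c+r^2<0$ is exactly $c<[b]\langle b\rangle$.

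Third, your argument for $S_3$ (``$T_1$ bijective plus eventually periodic implies on a cycle'') is too loose: $T$ is not globally $T_1$, so bijectivity of $T_1$ alone does not force every eventually periodic point to be periodic. The paper's argument here is the most delicate step: given $(b,c)\in S_3$, write $(b,c)=T_1^{-m}(b_0,c_0)$ with $(b_0,c_0)\in R_4$; locate the unique $(b_*,c_*)\in P_1$ on the cycle through $(b_0,c_0)$ and set $T_2(b_*,c_*)=T_1^{-m_*}(b_0,c_0)$; then compute $T_1^{-1}\circ T_2(b_*,c_*)=(-b_*,c_*)\in S_2$, which forces $m\le m_*$. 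Hence $(b,c)$ actually sits on the cycle, not merely in its basin.
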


\begin{proof}
Every element of $R_1$ and $R_4$ is a purely periodic point
(cf. the proof of \thmref{QuadraticHenselAlgorithmC}).

Every $(b, c) \in S_4 \setminus R_4$ is a purely periodic point since
$(b, c)$ is mapped into $R_4$ by iterating $T_{1}^{-1}$.
Thus, every element of $S_4$ is a purely periodic point.

It is not difficult to see that $(b, c) \in S_1 \setminus R_1$ is a
purely periodic point if and only if $T_{1}^{-1}(b, c) \in S_4$ which,
by \eqref{eq:QuadraticHenselT1Inverse}, is equivalent to $c < r b -
r^2 = \left[ b \right] \left\langle b \right\rangle$.
Hence, $P_1$ is the set of all purely periodic points in $S_1
\setminus R_1$.

There exists no purely periodic point in $S_2$.
In fact, we can see this as follows:
Let $(b, c)$ be an arbitrary element of $P_1$.
We have $T_{2}(b, c) \in S_3$ since $-rb + c + r^2 = c -
\left[ b \right] \left\langle b \right\rangle < 0$
(cf. \eqref{eq:QuadraticHenselT2}).
Hence, no purely periodic orbit enters $S_2$.

Every orbit starting from $(b, c) \in S_3$ passes through $R_4$ and
$P_1$, and then it re-enters $S_3$.
We denote by $(b_0, c_0)$ (resp., $(b_*, c_*)$) the point in $R_4$
(resp., in $P_1$) on the orbit.
Obviously, there exists $m \in {\mathbb Z}_{> 0}$ such that 
$(b, c) = T_{1}^{-m}(b_0, c_0)$.
Since $T_{2}(b_*, c_*) \in S_3$ is a point on the purely periodic
orbit passing through $(b_0, c_0)$, there exists $m_* \in {\mathbb
Z}_{> 0}$ such that $T_{2}(b_*, c_*) = T_{1}^{-m_*}(b_0, c_0)$.
We easily see that $T_{1}^{-1} \circ T_{2}(b_*, c_*) = (-b_*, c_*)$.
Since $b_* > p$ and $c_* > 0$, we see that $T_{1}^{-1} \circ
T_{2}(b_*, c_*) \in S_2$, which implies $m \le m_*$.
Hence, $(b, c)$ is a point on the purely periodic orbit passing
through $(b_0, c_0)$.
Therefore, every $(b, c) \in S_3$ is a purely periodic point.
\end{proof}

\section{Expansions of quadratic elements of ${\mathbb Q}_p$}\label{sec:ExpansionsQuadraticElementsQp}

Let $\alpha$ be an arbitrary quadratic element of ${\mathbb Q}_p$.
In this section, we will first show that each of the three algorithms gives
an eventually periodic expansion for $\alpha$, by showing that the
fractional part $\left\langle \alpha \right\rangle$ is mapped to a
quadratic Hensel root under some iterate of $T_{1}$ and
$T_{2}$.
We will then give a theorem that characterizes elements having purely
periodic expansions for each algorithm.

Let $\alpha^{\sigma}$ be the conjugate of $\alpha$ other than $\alpha$.
We consider the following six cases:
\begin{enumerate}[{Case} 1]
\item
  \begin{enumerate}[A:]
  \item $\lvert \alpha \rvert_p < \lvert \alpha^{\sigma} \rvert_p$ and $\lvert \alpha \rvert_p \le p^{-1}$,\label{Case1A}
  \item $\lvert \alpha \rvert_p < \lvert \alpha^{\sigma} \rvert_p$ and $\lvert \alpha \rvert_p \ge 1$,\label{Case1B}
  \end{enumerate}
\item
  \begin{enumerate}[A:]
  \item $\lvert \alpha \rvert_p > \lvert \alpha^{\sigma} \rvert_p$ and $\lvert \alpha \rvert_p \le p^{-1}$,\label{Case2A}
  \item $\lvert \alpha \rvert_p > \lvert \alpha^{\sigma} \rvert_p$ and $\lvert \alpha \rvert_p \ge 1$,\label{Case2B}
  \end{enumerate}
\item
  \begin{enumerate}[A:]
  \item $\lvert \alpha \rvert_p = \lvert \alpha^{\sigma} \rvert_p$ and $\lvert \alpha \rvert_p \le p^{-1}$,\label{Case3A}
  \item $\lvert \alpha \rvert_p = \lvert \alpha^{\sigma} \rvert_p$ and $\lvert \alpha \rvert_p \ge 1$.\label{Case3B}
  \end{enumerate}
\end{enumerate}

\ \\\noindent
{\it Case~\ref{Case1A}: $\lvert \alpha \rvert_p < \lvert \alpha^{\sigma} \rvert_p$ and $\lvert \alpha \rvert_p \le p^{-1}$}

Let $aX^2 + bX + c \in {\mathbb Z}[X]$ be the minimal polynomial of $\alpha$.
We see that
\begin{align*}
\lvert \alpha^{\sigma} \rvert_p &= \left\lvert - \frac{b}{a} - \alpha \right\rvert_p
 = \left\lvert \frac{b}{a} \right\rvert_p\\
\intertext{and}
\lvert \alpha \rvert_p &= \left\lvert \frac{c}{a \alpha^{\sigma}} \right\rvert_p
 = \left\lvert \frac{c}{b} \right\rvert_p.
\end{align*}
Since $\lvert \alpha \rvert_p < \lvert \alpha^{\sigma} \rvert_p$, we have
\begin{equation}\label{eq:CoefficientRelation}
\left\lvert \frac{a c}{b^2} \right\rvert_p < 1.
\end{equation}
Recalling the definition of $u$, we see that
\begin{equation*}
\frac{u(\alpha) p^{v_p(\alpha)}}{\alpha}=
\frac{u(\alpha)}{\alpha \lvert \alpha \rvert_p}=
\frac{c  \lvert b \rvert_p}{\alpha} \in {\mathbb Z}_p \setminus p{\mathbb Z}_p.
\end{equation*}
By substituting $c \lvert b \rvert_p / X$ for $X$ in $aX^2 + bX + c$,
we have the minimal polynomial of $c \lvert b \rvert_p / \alpha$ given by
\begin{equation*}
X^{2} +  b \lvert b \rvert_p X + a c \lvert b^2 \rvert_p \in {\mathbb Z}[X]. 
\end{equation*}
Note that $b \lvert b \rvert_p \in {\mathbb Z} \setminus p{\mathbb
Z}$, and $a c \lvert b^2 \rvert_p \in p{\mathbb Z}$ by
\eqref{eq:CoefficientRelation}.
Hence, $c \lvert b \rvert_p / \alpha$ is the conjugate of the
quadratic Hensel root $c \lvert b \rvert_p / \alpha^{\sigma}$.
By the proof of \lemref{T1AndT2}, we can see that the fractional part
of the conjugate of a quadratic Hensel root is also a quadratic Hensel
root.
Therefore, $T_{1}(\alpha)= \left\langle c \lvert b \rvert_p / \alpha
\right\rangle$ and $T_{2}(\alpha)= \left\langle - c \lvert b \rvert_p
/ \alpha \right\rangle$ are quadratic Hensel roots.

Consequently, in Case~\ref{Case1A}, $\alpha$ is mapped to a quadratic
Hensel root under one iteration of either $T_{1}$ or $T_{2}$.

\ \\\noindent
{\it Case~\ref{Case1B}: $\lvert \alpha \rvert_p < \lvert \alpha^{\sigma} \rvert_p$ and $\lvert \alpha \rvert_p \ge 1$}

Obviously, $\lvert \alpha \rvert_p = \lvert \left[ \alpha \right] \rvert_p < \lvert
\alpha^{\sigma} \rvert_p$ and $\lvert \alpha^{\sigma} \rvert_p > 1$
hold.
Hence, we have
\begin{equation*}
  \lvert \left\langle \alpha \right\rangle^{\sigma} \rvert_p =
  \lvert \alpha^{\sigma} -  \left[ \alpha \right] \rvert_p =
    \lvert \alpha^{\sigma} \rvert_p > 1.
\end{equation*}
Since $\lvert \left\langle \alpha \right\rangle \rvert_p \le p^{-1}$
and $\lvert \left\langle \alpha \right\rangle \rvert_p < \lvert
\left\langle \alpha \right\rangle^{\sigma} \rvert_p$,
Case~\ref{Case1B} reduces to Case~\ref{Case1A}.

\ \\\noindent
{\it Case~\ref{Case2A}: $\lvert \alpha \rvert_p > \lvert \alpha^{\sigma} \rvert_p$ and $\lvert \alpha \rvert_p \le p^{-1}$}

Let $aX^2 + bX + c \in {\mathbb Z}[X]$ be the minimal polynomial of $\alpha$.
Following a discussion similar to the one in Case~\ref{Case1A}, we see that
\begin{equation}\label{eq:Case2A}
  \lvert \alpha \rvert_p = \left\lvert \frac{b}{a} \right\rvert_p, \quad
  \lvert \alpha^{\sigma} \rvert_p = \left\lvert \frac{c}{b} \right\rvert_p, \quad
  \text{and} \quad
  \left\lvert \frac{b^2}{a c} \right\rvert_p > 1.
\end{equation}

Let us consider the case of applying $T_{1}$ to $\alpha$.
Since
\begin{align*}
T_{1}(\alpha) &=\frac{u(\alpha)}{\alpha \lvert \alpha \rvert_p} - d_{1}(\alpha),\\
\intertext{the conjugate $T_{1}(\alpha)^{\sigma} \neq T_{1}(\alpha)$ of $T_{1}(\alpha)$ is given by}
T_{1}(\alpha)^{\sigma} &=\frac{u(\alpha)}{\alpha^{\sigma} \lvert \alpha \rvert_p} - d_{1}(\alpha).
\end{align*}
By \eqref{eq:Case2A}, we have
\begin{equation*}
  \left\lvert \frac{u(\alpha)}{\alpha^{\sigma} \lvert \alpha \rvert_p} \right\rvert_p
  = \left\lvert \frac{b^2}{a c} \right\rvert_p > 1.
\end{equation*}
Since $\lvert d_{1}(\alpha) \rvert_p = 1$, we have
\begin{equation*}
\left\lvert T_{1}(\alpha)^{\sigma} \right\rvert_p =
  \left\lvert \frac{u(\alpha)}{\alpha^{\sigma} \lvert \alpha \rvert_p} \right\rvert_p
  > 1.
\end{equation*}

Similarly, in the case of applying $T_{2}$ to $\alpha$, we see
that $\left\lvert T_{2}(\alpha)^{\sigma} \right\rvert_p > 1$.

Since $\left\lvert T_{1}(\alpha) \right\rvert_p \le p^{-1}$ and
$\left\lvert T_{2}(\alpha) \right\rvert_p \le p^{-1}$,
we see that
$\left\lvert T_{1}(\alpha) \right\rvert_p < \left\lvert
T_{1}(\alpha)^{\sigma} \right\rvert_p$
and
$\left\lvert T_{2}(\alpha) \right\rvert_p < \left\lvert
T_{2}(\alpha)^{\sigma} \right\rvert_p$.
Therefore, Case~\ref{Case2A} reduces to Case~\ref{Case1A} after one
iteration of $T_{1}$ or $T_{2}$.

\ \\\noindent
{\it Case~\ref{Case2B}: $\lvert \alpha \rvert_p > \lvert \alpha^{\sigma} \rvert_p$ and $\lvert \alpha \rvert_p \ge 1$}

Since $\lvert \left[ \alpha \right] \rvert_p > \lvert
\alpha^{\sigma} \rvert_p$ and $\lvert \left[ \alpha \right] \rvert_p \ge 1$,
we have
\begin{equation*}
  \lvert \left\langle \alpha \right\rangle^{\sigma} \rvert_p =
  \lvert \alpha^{\sigma} -  \left[ \alpha \right] \rvert_p =
    \lvert \left[ \alpha \right] \rvert_p \ge 1.
\end{equation*}
Since $\lvert \left\langle \alpha \right\rangle \rvert_p \le p^{-1}$
and $\lvert \left\langle \alpha \right\rangle \rvert_p < \lvert
\left\langle \alpha \right\rangle^{\sigma} \rvert_p$,
Case~\ref{Case2B} reduces to Case~\ref{Case1A}.

\ \\\noindent
{\it Case~\ref{Case3A}: $\lvert \alpha \rvert_p = \lvert \alpha^{\sigma} \rvert_p$ and $\lvert \alpha \rvert_p \le p^{-1}$}

Let $\left\{ \epsilon_n \right\}_{n \ge 1}$ be an arbitrary
sequence in the set $\left\{1, 2\right\}$.
We obtain an expansion of $\alpha$ of the form
\eqref{eq:InfiniteContinuedFraction} by applying $T_{\epsilon_n} \circ
\dots \circ T_{\epsilon_1}$ ($n \in {\mathbb Z}_{> 0}$) to $\alpha$.
Let us define a sequence $\left\{ \alpha_{n} \right\}_{n \ge 0}$ by
\begin{equation*}
\alpha_{0}=\alpha \quad \text{and} \quad \alpha_{n}= T_{\epsilon_n} \circ \dots \circ T_{\epsilon_1}(\alpha_{0}) \quad (n \ge 1).
\end{equation*}
Assuming that $\lvert \alpha_n \rvert_p = \lvert \alpha_n^{\sigma}
\rvert_p$ for all $n \in {\mathbb Z}_{\ge 0}$, it is not difficult to see
that the expansion of $\alpha$ is identical with that of
$\alpha^{\sigma}$ obtained by applying $T_{\epsilon_n} \circ \dots
\circ T_{\epsilon_1}$ ($n \in {\mathbb Z}_{> 0}$) to
$\alpha^{\sigma}$.
Then, by \thmref{ConvergenceTheorem} (ii), we get $\alpha=\alpha^{\sigma}$,
which is a contradiction.
This proves that there exists $n \in {\mathbb Z}_{> 0}$ such that
$\lvert \alpha_n \rvert_p \neq \lvert \alpha_n^{\sigma} \rvert_p$.
Therefore, Case~\ref{Case3A} reduces to Case~\ref{Case1A} or
Case~\ref{Case2A} after sufficient iterations of $T_{1}$ and $T_{2}$.

%is also produced along with the generation of $\left\{ \alpha_{n}
%\right\}_{n \ge 0}$.

\ \\\noindent
{\it Case~\ref{Case3B}: $\lvert \alpha \rvert_p = \lvert \alpha^{\sigma} \rvert_p$ and $\lvert \alpha \rvert_p \ge 1$}

If $\lvert \left\langle \alpha \right\rangle \rvert_p \neq \lvert
\left\langle \alpha \right\rangle^{\sigma} \rvert_p$,
Case~\ref{Case3B} reduces to Case~\ref{Case1A} or Case~\ref{Case2A};
otherwise Case~\ref{Case3B} reduces to Case~\ref{Case3A}.
\ \\

Consequently, in all the cases, $\left\langle \alpha \right\rangle$ is
mapped to a quadratic Hensel root under some iterate of $T_{1}$ and
$T_{2}$, regardless of the order in which they are applied.
Hence, by Theorems~\ref{QuadraticHenselAlgorithmA}, \ref{QuadraticHenselAlgorithmB}, and \ref{QuadraticHenselAlgorithmC}, we have the
following theorem.

\begin{thm}\label{PadicLagrangeTheorem}
The expansion of every quadratic element of ${\mathbb Q}_p$ over
${\mathbb Q}$ obtained by each of Algorithms A, B, and C is
eventually periodic.
\end{thm}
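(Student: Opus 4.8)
The plan is to deduce \thmref{PadicLagrangeTheorem} from the reduction carried out in this section together with the per-algorithm periodicity results for quadratic Hensel roots. First I would record that the expansion of $\alpha$ produced by an algorithm with map $T$ is completely determined by the forward orbit $\langle\alpha\rangle, T(\langle\alpha\rangle), T^2(\langle\alpha\rangle),\dots$, since the data $t_i=t(T^{i-1}(\langle\alpha\rangle))$, $k_i=v_p(T^{i-1}(\langle\alpha\rangle))$, $d_i=d(T^{i-1}(\langle\alpha\rangle))$ are read off from it; hence the expansion is eventually periodic exactly when this orbit is. Next I would note that $\langle\alpha\rangle$ is itself a quadratic element of ${\mathbb Q}_p$ lying in $p{\mathbb Z}_p\setminus\{0\}$: indeed $[\alpha]$ has a finite $p$-adic expansion and is therefore rational, so $\langle\alpha\rangle=\alpha-[\alpha]$ is quadratic, and it is nonzero (otherwise $\alpha=[\alpha]\in{\mathbb Q}$), hence lies in $A_p\setminus\{0\}$, on which $T_1$, $T_2$, and so the map $T$ of each of Algorithms A, B, C, act and preserve quadraticity.

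The heart of the argument is the second step: showing that, whatever choice between $T_1$ and $T_2$ is made at each stage, the orbit of $\langle\alpha\rangle$ reaches a quadratic Hensel root after finitely many steps — this is precisely the content of the six-case discussion given just above the theorem. The decisive case is Case~\ref{Case1A}: when $|\alpha|_p<|\alpha^\sigma|_p$ and $|\alpha|_p\le p^{-1}$, one reads off from the minimal polynomial $aX^2+bX+c$ that $|ac/b^2|_p<1$, and substituting $c|b|_p/X$ for $X$ exhibits $c|b|_p/\alpha$ as the conjugate of a quadratic Hensel root; the computation inside the proof of \lemref{T1AndT2} then shows that the fractional part of a conjugate of a quadratic Hensel root is again a quadratic Hensel root, so both $T_1(\alpha)$ and $T_2(\alpha)$ are. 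All other cases funnel into this one: Case~\ref{Case2A} reaches it after one step (either of $T_1$, $T_2$ raises $|\cdot^{\sigma}|_p$ above $1$ while keeping $|\cdot|_p\le p^{-1}$), Case~\ref{Case3A} reaches Case~\ref{Case1A} or \ref{Case2A} after finitely many steps, and each of Cases~\ref{Case1B}, \ref{Case2B}, \ref{Case3B} (where $|\alpha|_p\ge1$) collapses to its ``A'' counterpart by replacing $\alpha$ with $\langle\alpha\rangle$. Once the orbit reaches a quadratic Hensel root, Theorems~\ref{QuadraticHenselAlgorithmA}, \ref{QuadraticHenselAlgorithmB}, and \ref{QuadraticHenselAlgorithmC} (according to the algorithm) say its orbit is eventually periodic, whence so is the orbit of $\langle\alpha\rangle$, and the theorem follows for all three algorithms simultaneously.

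I expect the main obstacle to be Case~\ref{Case3A}, where $|\alpha|_p=|\alpha^\sigma|_p\le p^{-1}$ and there is no valuation gap to exploit. I would handle it by contradiction: fix any sequence $\epsilon_n\in\{1,2\}$, put $\alpha_n=T_{\epsilon_n}\circ\cdots\circ T_{\epsilon_1}(\alpha)$, and suppose $|\alpha_n|_p=|\alpha_n^\sigma|_p$ for every $n$; then the algorithm produces exactly the same sequence $(t_i,k_i,d_i)$ from $\alpha$ and from $\alpha^\sigma$, so the two infinite continued fractions are identical, and by \thmref{ConvergenceTheorem}(ii) they both converge, forcing $\alpha=\alpha^\sigma$ and contradicting that $\alpha$ is quadratic. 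Hence the valuations must separate at some finite stage, returning us to Case~\ref{Case1A} or \ref{Case2A}. Since this whole reduction never depends on how the choice between $T_1$ and $T_2$ is resolved, it applies verbatim to Algorithms A, B, and C, which completes the proof.
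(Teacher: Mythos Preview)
Your proposal is correct and follows essentially the same route as the paper: the six-case reduction showing that the orbit of $\langle\alpha\rangle$ under any composition of $T_1$ and $T_2$ reaches a quadratic Hensel root in finitely many steps (with Case~\ref{Case1A} as the anchor and the convergence-based contradiction for Case~\ref{Case3A}), followed by an appeal to Theorems~\ref{QuadraticHenselAlgorithmA}--\ref{QuadraticHenselAlgorithmC}. One very small imprecision: you say each ``B'' case collapses to its ``A'' counterpart, but in the paper Case~\ref{Case2B} actually lands directly in Case~\ref{Case1A} (since $\lvert\langle\alpha\rangle^\sigma\rvert_p=\lvert[\alpha]\rvert_p\ge1>\lvert\langle\alpha\rangle\rvert_p$), not in Case~\ref{Case2A}; this does not affect the argument, since Case~\ref{Case2A} itself feeds into Case~\ref{Case1A} after one step.
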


We now turn to the characterization of elements with purely periodic
expansions for each algorithm.
Note that we consider $d_{0}$ in
\eqref{eq:IntroInfiniteContinuedFraction} to be zero for purely
periodic expansions, and therefore elements with purely periodic
expansions are necessarily in $p{\mathbb Z}_p$.

Except for quadratic Hensel roots, there exists no quadratic element
of $p{\mathbb Z}_p$ whose expansion by our algorithms is purely
periodic.
This is because every quadratic element of $p{\mathbb Z}_p$ is mapped
to a quadratic Hensel root under some iterate of $T_{1}$ and $T_{2}$,
as we have seen above.

By \thmref{RationalNumberTheorem}, which will be shown in
Section~\ref{sec:ExpansionsRationalNumbers}, we also see that there
exists no rational number whose expansion is periodic.

Consequently, for each algorithm, the set of elements having purely
periodic expansions, i.e., the reduced set, is identical with the set
of purely periodic points within the set $S$ of quadratic Hensel
roots.
Hence, by \thmref{QuadraticHenselAlgorithmA},
\corref{PurelyPeriodicPointsAlgorithmB}, and
\lemref{PurelyPeriodicPointsAlgorithmC}, we have the following
theorem.

\begin{thm}\label{PadicGaloisTheorem}
The reduced sets for Algorithms A, B, and C are given by
$S$, $R$, and $P_1 \cup R_1 \cup S_3 \cup S_4$, respectively.
\end{thm}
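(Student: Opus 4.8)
The plan is to assemble the statement from the three algorithm-specific analyses of quadratic Hensel roots in \secref{sec:ExpansionsQuadraticHenselRoots}, together with the reduction carried out in the paragraphs immediately preceding the theorem. First I would recall the key reduction: for each of the three algorithms, the reduced set---the set of elements of ${\mathbb Q}_p$ whose expansion is purely periodic---coincides with the set of purely periodic points of the associated map $T$ that lie inside the set $S$ of quadratic Hensel roots. This holds because any element with a purely periodic expansion lies in $p{\mathbb Z}_p$ (we take $d_0 = 0$ in that case), because \thmref{RationalNumberTheorem} rules out periodic expansions of rational numbers, and because every quadratic element of $p{\mathbb Z}_p$ is sent into $S$ under a finite iterate of $T_1$ and $T_2$, so a quadratic element that is not a Hensel root cannot be purely periodic either. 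Thus the problem is reduced to identifying, algorithm by algorithm, the purely periodic points of $T$ within $S$.

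Next I would dispatch the three cases by direct citation. For Algorithm~A (where $T = T_2$), \thmref{QuadraticHenselAlgorithmA} says that every $(b,c) \in S$ is purely periodic of period one or two, so the reduced set is all of $S$. For Algorithm~B, \corref{PurelyPeriodicPointsAlgorithmB} identifies the set of purely periodic points of $T$ in $S$ as exactly $R$, so the reduced set is $R$. For Algorithm~C, \lemref{PurelyPeriodicPointsAlgorithmC} identifies this set as $P_1 \cup R_1 \cup S_3 \cup S_4$, giving the reduced set $P_1 \cup R_1 \cup S_3 \cup S_4$. Collecting the three cases yields the claim.

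I do not expect a genuine obstacle here: the theorem is a synthesis of results already established, and the only place that demands attention is the reduction in the first paragraph, which itself leans on the orbit analysis of \secref{sec:ExpansionsQuadraticElementsQp} (every $\langle\alpha\rangle$ eventually enters $S$) and on the termination theorem \thmref{RationalNumberTheorem} for rationals. With those in hand, the proof is just a matter of invoking \thmref{QuadraticHenselAlgorithmA}, \corref{PurelyPeriodicPointsAlgorithmB}, and \lemref{PurelyPeriodicPointsAlgorithmC} in turn.
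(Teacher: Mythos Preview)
Your proposal is correct and follows essentially the same route as the paper: the paragraphs preceding \thmref{PadicGaloisTheorem} carry out exactly the reduction you describe (purely periodic elements lie in $p{\mathbb Z}_p$, rationals are excluded by \thmref{RationalNumberTheorem}, and non--Hensel-root quadratics are excluded because they eventually land in $S$), and then the paper cites \thmref{QuadraticHenselAlgorithmA}, \corref{PurelyPeriodicPointsAlgorithmB}, and \lemref{PurelyPeriodicPointsAlgorithmC} just as you do.
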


\section{Expansions of rational Hensel roots}\label{sec:ExpansionsRationalHenselRoots}

$\alpha=0$ is the root of $X \in {\mathbb Z}[X]$ and thus is a
rational Hensel root.
As described in Section~\ref{sec:PadicContinuedFractionExpansion},
we do not expand $\alpha=0$ any further.

Let us consider expansions of rational Hensel roots other
than $0$, i.e., those of roots of $X + c \in {\mathbb
Z}[X]$ with $c \in p{\mathbb Z} \setminus \left\{ 0 \right\}$.
Recall the definitions of our algorithms in
Section~\ref{sec:ContinuedFractionAlgorithms}.
Since the coefficient $b$ of $X$ of the minimal polynomial in ${\mathbb Z}[X]$ satisfies $b=1$
for every rational Hensel root, we can classify our algorithms into two
classes:
\begin{enumerate}[{Class} I:]
\item Algorithms which apply $T_{2}$ to every rational Hensel root other than $0$
  (Algorithms A and B)\label{ClassI}
\item Algorithms which apply $T_{2}$ to a rational Hensel root if the
  coefficient $c$ of its minimal polynomial satisfies $c>0$ and apply
  $T_{1}$ if $c<0$
  (Algorithm C)\label{ClassII}
\end{enumerate}
In the following, we show that whichever class an algorithm belongs
to, it gives a finite expansion for every rational Hensel root
other than $0$.

\ \\\noindent
{\it Class~\ref{ClassI}}

Let $\alpha$ be an arbitrary rational Hensel root other than $0$.
Applying $T_{2}$ to $\alpha$, we have
\begin{equation*}
T_{2}(\alpha) =\frac{-u(\alpha)}{\alpha \lvert \alpha \rvert_p} - d_{2}(\alpha).
\end{equation*}
Let $X + c \in {\mathbb Z}[X]$ be the minimal polynomial of $\alpha$.
Since $\alpha=-c$, we see that $\lvert \alpha \rvert_p = \lvert c \rvert_p$.
Since
\begin{equation*}
\frac{-u(\alpha)}{\alpha \lvert \alpha \rvert_p}= 1,
\end{equation*}
we have $d_{2}(\alpha)=1$, which implies
\begin{equation*}
T_{2}(\alpha) =\frac{-c}{\alpha} - 1 = 0.
\end{equation*}
Therefore, $\alpha$ is expanded into the finite continued fraction
\begin{equation*}
\alpha=\frac{-c}{1}
\end{equation*}
by each of the algorithms belonging to Class~\ref{ClassI}.

\ \\\noindent
{\it Class~\ref{ClassII}}

Let $\alpha$ be an arbitrary rational Hensel root other than $0$, and
let $X + c \in {\mathbb Z}[X]$ be the minimal polynomial of $\alpha$.

If $c>0$, we apply $T_{2}$ to $\alpha$.
Hence, $\alpha$ is expanded as
\begin{equation*}
\alpha=\frac{-c}{1}
\end{equation*}
(cf. Class~\ref{ClassI}).

Let us consider the case where $c<0$.
In this case, we apply $T_{1}$ to $\alpha$.
Since
\begin{equation*}
\frac{u(\alpha)}{\alpha \lvert \alpha \rvert_p}= -1,
\end{equation*}
we see that $d_{1}(\alpha)=p-1$, which implies
\begin{equation}\label{eq:ClassIIc<0part1}
T_{1}(\alpha) =\frac{c}{\alpha} - (p-1) = -p.
\end{equation}
Note that $T_{1}(\alpha) = -p$ is also a rational Hensel root whose
minimal polynomial is $X + p \in {\mathbb Z}[X]$.
Since the constant term $p$ of $X + p$ is positive, $-p$ is expanded as
\begin{equation}\label{eq:ClassIIc<0part2}
-p=\frac{-p}{1}
\end{equation}
by using $T_{2}$.
By \eqref{eq:ClassIIc<0part1} and \eqref{eq:ClassIIc<0part2}, we
see that $\alpha$ is expanded as
\begin{equation*}
\alpha=\cfrac{c}{p-1 +
    \cfrac{-p}{1}}
\end{equation*}
when $c<0$.

Therefore, the algorithm belonging to
Class~\ref{ClassII} also gives a finite continued fraction for
$\alpha$.
\ \\

As a consequence, we get the following theorem.

\begin{thm}\label{RationalHenselRootTheorem}
Each of Algorithms A, B, and C gives a finite expansion for every
rational Hensel root.
\end{thm}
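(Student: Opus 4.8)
The plan is to assemble the facts established in the Class~\ref{ClassI} and Class~\ref{ClassII} analyses immediately preceding the statement. The organizing principle is that for a rational Hensel root the coefficient of $X$ in the minimal polynomial is always $b = 1$, so the branching in Algorithms~A, B, and~C depends only on the sign of the constant term $c$; this is exactly what splits the three algorithms into Class~\ref{ClassI} (Algorithms~A and~B, which always apply $T_2$) and Class~\ref{ClassII} (Algorithm~C, which applies $T_2$ when $c>0$ and $T_1$ when $c<0$). Since $\alpha = 0$ is left unexpanded, it suffices to prove termination for a rational Hensel root $\alpha = -c$ with $c \in p\mathbb{Z}\setminus\{0\}$, and this can be done by a direct one- or two-step computation with the maps $T_1,T_2$ defined in Section~\ref{sec:TwoBasicMaps}.

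First I would handle the single application of $T_2$, which covers Class~\ref{ClassI} entirely and Class~\ref{ClassII} in the subcase $c>0$. From $\alpha=-c$ one gets $\lvert\alpha\rvert_p=\lvert c\rvert_p$, hence $-u(\alpha)p^{v_p(\alpha)}/\alpha = 1$; therefore the digit is $d_2(\alpha)=1$ and $T_2(\alpha)=0$, giving the finite expansion $\alpha=-c/1$.

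The only remaining case is Class~\ref{ClassII} with $c<0$, where $T_1$ is applied. The analogous computation gives $u(\alpha)p^{v_p(\alpha)}/\alpha=-1$, so $d_1(\alpha)=p-1$ and $T_1(\alpha)=-p$. The key point is that $-p$ is again a rational Hensel root, with minimal polynomial $X+p$ and positive constant term $p$; hence a single further application of $T_2$ (the previously treated subcase) terminates it, and $\alpha$ acquires a finite expansion of depth two. Collecting the three cases, each algorithm terminates on every rational Hensel root within at most two iterations.

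There is no real obstacle beyond keeping track of $u$, $v_p$, and the digits $d_1,d_2$ in each computation; the one step deserving a second look is the verification that $-p$ genuinely lies in the domain $A_p\setminus\{0\}$ and that its minimal polynomial is $X+p$, so that the second step is legitimate and reduces to the already-settled $c>0$ subcase rather than spawning new cases.
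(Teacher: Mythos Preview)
Your proposal is correct and follows essentially the same route as the paper: the paper's proof of this theorem is precisely the Class~\ref{ClassI}/Class~\ref{ClassII} discussion you summarize, computing $T_2(\alpha)=0$ in one step for Algorithms~A and~B (and for Algorithm~C when $c>0$), and $T_1(\alpha)=-p$ followed by $T_2(-p)=0$ for Algorithm~C when $c<0$. Your computations of $u(\alpha)p^{v_p(\alpha)}/\alpha=\pm 1$ and the resulting digits $d_2(\alpha)=1$, $d_1(\alpha)=p-1$ match the paper exactly.
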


\section{Expansions of rational numbers}\label{sec:ExpansionsRationalNumbers}

In this section, we will show that each of the three algorithms gives
a finite expansion for every rational number.

Let $\alpha$ be an arbitrary rational number.
We distinguish the following two cases:
  \begin{enumerate}[{Case} A:]
  \item $\lvert \alpha \rvert_p \le p^{-1}$,\label{CaseA}
  \item $\lvert \alpha \rvert_p \ge 1$.\label{CaseB}
  \end{enumerate}

\ \\\noindent
{\it Case~\ref{CaseA}: $\lvert \alpha \rvert_p \le p^{-1}$}

If $\alpha$ is a rational Hensel root, our algorithms give a finite
expansion for $\alpha$ (cf. \thmref{RationalHenselRootTheorem}).

Let us consider the case where $\alpha$ is not a rational Hensel root.
Let $bX + c \in {\mathbb Z}[X]$ be the minimal polynomial of $\alpha$.
Obviously, $\alpha=-c/b$.
Since $b$, $c$ are coprime and $\lvert \alpha \rvert_p \le p^{-1}$, we
see that $b \in {\mathbb Z} \setminus p{\mathbb Z}$ and $c \in
p{\mathbb Z}$.
Hence, we have $\lvert \alpha \rvert_p= \lvert c \rvert_p$.
We easily see that
\begin{equation*}
\frac{u(\alpha)}{\alpha \lvert \alpha \rvert_p}=
-b \in {\mathbb Z} \setminus p{\mathbb Z},
\end{equation*}
which implies that $T_{1}(\alpha)= \left\langle -b \right\rangle$ and
$T_{2}(\alpha)= \left\langle b \right\rangle$ are in $p{\mathbb Z}$,
i.e., they are rational Hensel roots.
Therefore, the expansion of $\alpha$ obtained by each of our
algorithms is finite also in the case where $\alpha$ is not a rational
Hensel root.

\ \\\noindent
{\it Case~\ref{CaseB}: $\lvert \alpha \rvert_p \ge 1$}

Since $\left\langle \alpha \right\rangle \in {\mathbb Q}$ and $\lvert
\left\langle \alpha \right\rangle \rvert_p \le p^{-1}$,
Case~\ref{CaseB} reduces to Case~\ref{CaseA}.
\ \\

Summarizing the discussion above, we have the following theorem.

\begin{thm}\label{RationalNumberTheorem}
Each of Algorithms A, B, and C gives a finite expansion for every
rational number.
\end{thm}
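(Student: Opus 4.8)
The plan is to reduce the general rational case to the rational Hensel root case, which is already settled by \thmref{RationalHenselRootTheorem}. First I would split according to the $p$-adic size of $\alpha$, exactly as in Case~A ($\lvert\alpha\rvert_p\le p^{-1}$) and Case~B ($\lvert\alpha\rvert_p\ge1$). Case~B is immediate: since $\alpha\in{\mathbb Q}$, its fractional part $\langle\alpha\rangle$ is again rational and lies in $p{\mathbb Z}_p$, so $\lvert\langle\alpha\rangle\rvert_p\le p^{-1}$; the algorithms expand $\alpha$ by first extracting $[\alpha]=d_0$ and then working on $\langle\alpha\rangle$, so Case~B collapses to Case~A applied to $\langle\alpha\rangle$.

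For Case~A the key observation is that a rational number in $p{\mathbb Z}_p$ is either already a rational Hensel root, in which case \thmref{RationalHenselRootTheorem} gives a finite expansion, or else it has minimal polynomial $bX+c$ with $b\in{\mathbb Z}\setminus p{\mathbb Z}$ and $c\in p{\mathbb Z}$ (coprimality of $b,c$ together with $\lvert\alpha\rvert_p\le p^{-1}$ forces this). In the latter case one computes $u(\alpha)p^{v_p(\alpha)}/\alpha = u(\alpha)/(\alpha\lvert\alpha\rvert_p)=-b\in{\mathbb Z}\setminus p{\mathbb Z}$; hence applying either $T_1$ or $T_2$ to $\alpha$ subtracts off the integral part of $\mp b$ and leaves $\langle -b\rangle$ or $\langle b\rangle$, each of which is a rational number in $p{\mathbb Z}$, i.e.\ a rational Hensel root. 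So after one step of whichever map the algorithm selects, we are expanding a rational Hensel root, and \thmref{RationalHenselRootTheorem} finishes the argument. Since each algorithm is one of A, B, or C, and for each of these the map applied to $\alpha$ is $T_1$ or $T_2$ regardless of the sign data, this one-step reduction works uniformly.

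I do not anticipate a genuine obstacle here: everything is an exact finite computation in ${\mathbb Q}$, and the only subtlety is the bookkeeping that (i)~the coefficient $b$ of $X$ for a rational Hensel root is $1$, so it really is covered by the algorithm definitions, and (ii)~the fractional part of a rational is rational and $p$-adically small, which keeps us inside the rationals at every step. The mild point to be careful about is confirming that the termination in \thmref{RationalHenselRootTheorem} is truly finite rather than eventually periodic with a nontrivial period — but that theorem already asserts a finite expansion, so invoking it directly suffices. Assembling Cases~A and B then yields the stated theorem for all three algorithms.
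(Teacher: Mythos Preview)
Your proposal is correct and follows essentially the same route as the paper: the same Case~A/Case~B split on $\lvert\alpha\rvert_p$, the same reduction of Case~B to Case~A via $\langle\alpha\rangle$, and in Case~A the same computation $u(\alpha)/(\alpha\lvert\alpha\rvert_p)=-b$ showing that one application of $T_1$ or $T_2$ lands in $p{\mathbb Z}$, hence on a rational Hensel root handled by \thmref{RationalHenselRootTheorem}.
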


\section{Concluding remarks}\label{sec:ConcludingRemarks}

It is worth making some remarks on the generality of our results.
We denote by $A_p'$ the set of all the algebraic elements of ${\mathbb
Q}_p$ of degree at most two.

\begin{enumerate}
\item
We have dealt with continued fractions of the form
\eqref{eq:IntroInfiniteContinuedFraction}, but we can also consider
another basic type, namely continued fractions of the form
\begin{equation}\label{eq:MultiplicativeIntroInfiniteContinuedFraction2}
  \cfrac{t_{1} p^{k_{1}}}{d_{1}+
    \cfrac{t_{2} p^{k_{2}}}{d_{2}+
      \cfrac{t_{3} p^{k_{3}}}{
     \ddots}}} \quad (k_1 \in {\mathbb Z}, ~ k_{n+1} \in {\mathbb Z}_{> 0},
  ~ t_{n}, d_{n} \in {\mathbb Z} \setminus p{\mathbb Z} ~ (n \ge 1)).
\end{equation}
The convergence of continued fractions
\eqref{eq:MultiplicativeIntroInfiniteContinuedFraction2} is also
guaranteed by \thmref{ConvergenceTheorem}.
By replacing the domain $A_p \setminus \left\{ 0 \right\}$ of $T_{1}$
and $T_{2}$ by $A_p' \setminus \left\{ 0 \right\}$, we can modify our
algorithms to generate continued fractions of the form
\eqref{eq:MultiplicativeIntroInfiniteContinuedFraction2}.
Even with this modification, all the theorems in this paper still hold
for the modified algorithms.

\item
We have focused on the expansion of the elements of $A_p'$, but it is easy to
extend our algorithms to cover all the elements of ${\mathbb Q}_p$.
One of the simplest ways to do this is to expand every element of
${\mathbb Q}_p \setminus A_p'$ by using the map $T$ such that $t$ and
$d$ in \eqref{eq:T} satisfy $t(x) \equiv 1$ and $\text{Im}(d) \subset
\left\{1, \dots, p-1 \right\}$.
The convergence of resulting continued fractions is guaranteed, as we
have seen in Section~\ref{sec:ConvergenceContinuedFractions}.
Note that even with this extension, a given element of ${\mathbb Q}_p
\setminus A_p'$ has neither a periodic nor finite expansion since
$\text{Im}(t)$ and $\text{Im}(d)$ are included in ${\mathbb Q}$.
\end{enumerate}

Algorithms other than those presented here, as well as the extension
of our approach to multidimensional $p$-adic continued fractions, will
be reported in forthcoming papers.

\section*{Acknowledgements}

This research was supported by JSPS KAKENHI Grant Number 15K00342.

\end{document}